\newtheorem{theorem}{Theorem}[section]
\newtheorem{remark}{Remark}[section]
\newtheorem{lemma}{Lemma}[section]
\newtheorem{proposition}{Proposition}[section]
\newtheorem{corollary}{Corollary}[section]
\numberwithin {equation}{section}
\newenvironment{proof}{\textbf{Proof.}} {\hfill $\Box$}
\begin{document}
\title{\bf Weighted function spaces and Dunkl transform}
\author{Chokri Abdelkefi
 \footnote{\small Work supported
by the DGRST research project 04/UR/15-02 and the program CMCU 10G /
1503.}\\\\\small  Department of Mathematics, Preparatory
Institute of Engineer Studies of Tunis  \\ \small 1089 Monfleury Tunis, Tunisia\\
\small E-mail : chokri.abdelkefi@ipeit.rnu.tn  }%
\date{}
\maketitle
\begin{abstract}
We introduce first weighted function spaces on $\mathbb{R}^d$ using
the Dunkl convolution that we call Besov-Dunkl spaces. We provide
characterizations of these spaces by decomposition of functions.
Next we obtain in the real line and in radial case on $\mathbb{R}^d$
weighted $L^p$-estimates of the Dunkl transform of a function in
terms of an integral modulus of continuity which gives a
quantitative form of the Riemann-Lebesgue lemma. Finally, we show in
both cases that the Dunkl transform of a function is in $L^1$ when
this function belongs to a suitable Besov-Dunkl space.
\end{abstract}
{\bf Keywords :}  {\small Dunkl operators, Dunkl transform, Dunkl
translations, Dunkl convolution, Besov-Dunkl spaces.}\\
\noindent {\bf Mathematics Subject Classification (2000):} Primary
42B10, 46E30, Secondary 44A35.
\section{Introduction }
\par Dunkl theory generalizes classical Fourier analysis on
$\mathbb{R}^d$. It started twenty years ago with Dunkl's seminal
work [9] and was further developed by several mathematicians (see
[2, 8, 11, 12, 13, 21]). We consider Dunkl operators $T_i, 1 \leq i
\leq d$, on $\mathbb{R}^d$, associated to an arbitrary finite
reflection group $G$ and a non negative multiplicity function $k$.
The Dunkl kernel $E_k$ has been introduced by C.F. Dunkl in [10].
This kernel is used to define the Dunkl transform $\mathcal{F}_k$.
K. Trim\`eche has introduced in [22] the Dunkl translation operators
$\tau_x$,
 $x\in \mathbb{R}^d$, on the space of infinitely
differentiable functions on $\mathbb{R}^d$.  At the moment an
explicit formula for the Dunkl translation $\tau_x(f)$ of a function
$f$ is unknown in general. However, such formula is known in two
cases: when the function $f$ is radial and when $G= \mathbb{Z}_2^d$,
(see next section). In particular, the boundedness of $\tau_x$ is
established in these cases. As a result one obtains a formula for
the convolution
$\ast_k$.\\

There are many ways to define the Besov spaces (see [4, 5, 20]) and
the Besov spaces for the Dunkl operators (see [1, 3]). Let $
\Phi(\mathbb{R}^d)$ be the set of all sequences $
(\varphi_{j})_{j\in \mathbb{N}}$ of functions in $
\mathcal{S}(\mathbb{R}^d)^{rad}$ such that
\begin{itemize}
\item[(i)] $\mbox{supp}\,\mathcal{F}_k(\varphi_{0})
\subset B(0,2)=\{x\in \mathbb{R}^d\,;\, \|x\|\leq 2\}$ and\\
$\mbox{supp}\,\mathcal{F}_k(\varphi_{j}) \subset A_j=\{x\in
\mathbb{R}^d\,;\,2^{j-1}\leq \|x\|\leq 2^{j+1}\}$ for $j\in
\mathbb{N\backslash}\{0\}$.
\item[(ii)] $ \displaystyle{\sup_{j\in \mathbb{N}}\| \varphi_{j}\|_{1,k}<+\infty}$.
\item[(iii)] $\displaystyle{\sum_{j\in \mathbb{N}}\mathcal{F}_k(\varphi_{j})(x) =1}$,  for  $x\in \mathbb{R}^d $.
\end{itemize}$ \mathcal{S}(\mathbb{R}^d)^{rad}$ being the subspace of functions
in the Schwartz space $\mathcal{S}(\mathbb{R}^d)$ which are radial.
For $ \beta>0$, $1\leq p , q \leq + \infty$ and $
(\varphi_{j})_{j\in \mathbb{N}}\in \Phi(\mathbb{R}^d)$, we define
the weighted Besov-Dunkl space denoted by $
\mathcal{B}\mathcal{D}_{p,q}^{\beta,k} $ as the subspace of
functions $f \in L^p_k(\mathbb{R}^d)$ satisfying
 $$
\Big(\sum_{j\in\mathbb{N}}(2^{j\beta}\|\varphi_{j }\ast_k
f\|_{p,k})^q \Big)^{\frac{1}{q}}<+\infty \;\;\;\quad if\quad q <
+\infty$$ and $$ \sup_{j\in \mathbb{N}}2^{j\beta}\|\varphi_{j
}\ast_k f\|_{p,k}<+\infty\quad\quad if \quad q = +\infty ,$$ where
$L^p_k(\mathbb{R}^d)$ is the space $L^{p}(\mathbb{R}^d, w_k(x)dx),$
with $w_k$ the weight function given by
$$w_k(x) = \prod_{\xi\in R_+} |\langle \xi,x\rangle|^{2k(\xi)},
\quad x \in \mathbb{R}^d ,$$ $R_+$ being a positive root system (see next section).\\

For $1\leq p\leq 2$, we introduce the class
$\mathfrak{M}_p(\mathbb{R}^d)$ of sequences $ (g_{j})_{j\in
\mathbb{N}}$ of functions in $L^p_k(\mathbb{R}^d)$ such that
$\mbox{supp}\,\mathcal{F}_k(g_{0}) \subset B(0,2)$ and
$\mbox{supp}\,\mathcal{F}_k(g_{j}) \subset A_j$ for $j\in \mathbb{N\backslash}\{0\}$.\\

The modulus of continuity $\omega_{p}(f)$ of an
$L^p$-function $f$, $1\leq p\leq2$ is defined:\\
$\bullet$ On the real line by :$$\omega_{p}(f)(t)= \|\tau_t(f) +
\tau_{-t}(f) - 2f\|_{p,k}\,,\;\;t\geq0\,,$$
which is known as the modulus of continuity of second order of $f$. \\
$\bullet$ In radial case on $\mathbb{R}^d$ by : $$\omega_p(f) (t) =
\displaystyle{\int_{S^{d-1}}}\|\tau_{tu}(f) -
f\|_{p,k}d\sigma(u)\,,\;\;t\geq0\,,$$ where $f$ is a radial
$L^p$-function and $S^{d-1}$ the unit sphere on $\mathbb{R}^d$ with
the normalized surface measure $d\sigma$.\\

In this paper,  we provide first characterizations of the
Besov-Dunkl spaces $\mathcal{B}\mathcal{D}_{p,q}^{\beta,k}$  by
decomposition of functions using the class
 $\mathfrak{M}_p(\mathbb{R}^d)$ for $ \beta>0$, $1\leq p\leq 2$ and
$1\leq q < + \infty$. This extend to the Dunkl operators on
$\mathbb{R}^d$ some results obtained for the classical case in [20].
Next we obtain weighted $L^p$-estimates of the Dunkl transform of a
function in terms of an integral modulus of continuity. These
results are carried out on the real line and in radial case on
$\mathbb{R}^d$. In both cases these are expressed as a gauge on the
size of in terms of an integral modulus of continuity of $f$. As
consequence, we give a quantitative form of the Riemann-Lebesgue
lemma. Finally, we show that the Dunkl transform $\mathcal{F}_k(f)$
of a function $f$ is in $L^1_k(\mathbb{R}^d)$
when $f$ belongs to a suitable Besov-Dunkl space.  \\

The contents of this paper are as follows. \\In section 2, we
collect some basic definitions and results about harmonic analysis
associated with Dunkl operators .\\
In section 3, we characterize the Besov-Dunkl spaces by
decomposition of functions. \\In section 4, we obtain inequalities
for the Dunkl transform $\mathcal{F}_k(f)$ of $f$, both on the real
line and in radial case on $\mathbb{R}^d$. As consequence, we give a
quantitative form of the Riemann-Lebesgue lemma. We show Finally
further results of integrability of $\mathcal{F}_k(f)$ when $f$ satisfies a suitable condition.\\

Along this paper we denote by $\langle .,.\rangle$ the usual
Euclidean inner product in $\mathbb{R}^d$ as well as its extension
to $\mathbb{C}^d \times \mathbb{C}^d$, we write for $x \in
\mathbb{R}^d, \|x\| = \sqrt{\langle x,x\rangle}$ and we use $c$ to
denote a suitable positive constant which is not necessarily the
same in each occurrence. Furthermore, we denote by

$\bullet\quad \mathcal{E}(\mathbb{R}^d)$ the space of infinitely
differentiable functions on $\mathbb{R}^d$.

$\bullet\quad \mathcal{S}(\mathbb{R}^d)$ the Schwartz space of
functions in $\mathcal{E}( \mathbb{R}^d)$ which are rapidly
decreasing as well as their derivatives.

$\bullet\quad \mathcal{D}(\mathbb{R}^d)$ the subspace of
$\mathcal{E}(\mathbb{R}^d)$ of compactly supported functions.
\section{Preliminaries}
 $ $ In this section, we recall some notations and
results in Dunkl theory and we refer for more details to the
articles [8, 9, 18] or to the surveys [16].\\

Let $G$ be a finite reflection group on $\mathbb{R}^{d}$, associated
with a root system $R$. For $\alpha\in R$, we denote by $H_\alpha$
the hyperplane orthogonal to $\alpha$. For a given
$\beta\in\mathbb{R}^d\backslash\bigcup_{\alpha\in R} H_\alpha$, we
fix a positive subsystem $R_+=\{\alpha\in R: \langle
\alpha,\beta\rangle>0\}$. We denote by $k$ a nonnegative
multiplicity function defined on $R$ with the property that $k$ is
$G$-invariant. We associate with $k$ the index
$$\gamma = \gamma (R) = \sum_{\xi \in R_+} k(\xi) \geq 0, $$
and the weight function $w_k$ defined by
$$w_k(x) = \prod_{\xi\in R_+} |\langle \xi,x\rangle|^{2k(\xi)},
\quad x \in \mathbb{R}^d .$$ $w_k$ is $G$-invariant and homogeneous
of degree $2\gamma$.\\

Further, we introduce the Mehta-type constant $c_k$ by
$$c_k = \left(\int_{\mathbb{R}^d} e^{- \frac{\|x\|^2}{2}}
w_k (x)dx\right)^{-1}.$$

For every $1 \leq p \leq + \infty$, we denote by
$L^p_k(\mathbb{R}^d)$ the space $L^{p}(\mathbb{R}^d, w_k(x)dx),$
$\,$ $L^p_k( \mathbb{R}^d)^{rad}$ the subspace of those $f \in
L^p_k( \mathbb{R}^d)$ that are radial  and  we use $\|\ \;\|_{p,k}$
as a shorthand for $\|\ \;\|_{L^p_k( \mathbb{R}^d)}.$\\

By using the homogeneity of $w_k$, it is shown in [14] that for $f
\in L^1_k ( \mathbb{R}^d)^{rad}$, there exists a function $F$ on
$[0, + \infty)$ such that $f(x) = F(\|x\|)$, for all $x \in
\mathbb{R}^d$. The function $F$ is integrable with respect to the
measure $r^{2\gamma+d-1}dr$ on $[0, + \infty)$ and we have
 \begin{eqnarray} \int_{\mathbb{R}^d}  f(x)\,w_k(x)dx &=&
 \int^{+\infty}_0
\Big( \int_{S^{d-1}}w_k(ry)d\sigma(y)\Big)
F(r)r^{d-1}dr\nonumber\\&= & d_k\int^{+ \infty}_0 F(r)
r^{2\gamma+d-1}dr,
\end{eqnarray}
  where $S^{d-1}$
is the unit sphere on $\mathbb{R}^d$ with the normalized surface
measure $d\sigma$  and \begin{eqnarray}d_k=\int_{S^{d-1}}w_k
(x)d\sigma(x) = \frac{c^{-1}_k}{2^{\gamma +\frac{d}{2} -1}
\Gamma(\gamma + \frac{d}{2})}\;.  \end{eqnarray}

The Dunkl operators $T_j\,,\ \ 1\leq j\leq d\,$, on $\mathbb{R}^d$
associated with the reflection group $G$ and the multiplicity
function $k$ are the first-order differential- difference operators
given by
$$T_jf(x)=\frac{\partial f}{\partial x_j}(x)+\sum_{\alpha\in\mathcal{R}_+}k(\alpha)
\alpha_j\,\frac{f(x)-f(\sigma_\alpha(x))}{\langle\alpha,x\rangle}\,,\quad
f\in\mathcal{E}(\mathbb{R}^d)\,,\quad x\in\mathbb{R}^d\,,$$ where
$\sigma_\alpha$ is the reflection on the hyperplane $H_\alpha$ and
$\alpha_j=\langle\alpha,e_j\rangle,$ $(e_1,\ldots,e_d)$ being
the canonical basis of $\mathbb{R}^d$\,.\\In the case $k=0$, the $T_j$ reduce to the corresponding partial derivatives.\\

For $y \in \mathbb{R}^d$, the system
$$\left\{\begin{array}{lll}T_ju(x,y)&=&y_j\,u(x,y),\qquad1\leq j\leq d\,,\\  &&\\
u(0,y)&=&1\,.\end{array}\right.$$ admits a unique analytic solution
on $\mathbb{R}^d$, denoted by $E_k(x,y)$ and called the Dunkl
kernel. This kernel has a unique holomorphic extension to
$\mathbb{C}^d \times \mathbb{C}^d $.\\ M. R\"osler has proved in
[15] the following integral representation for the Dunkl kernel
$$E_k(x,z)=\int_{\mathbb{R}^d}e^{\langle
y\,,\,z\rangle}\,d\mu_x^k(y)\,,\quad x\in\mathbb{R}^d,\quad
z\in\mathbb{C}^d\,,$$ where $\mu_x^k$ is a probability measure on
$\mathbb{R}^d$ with support in the closed ball $B(0,\|x\|)$ of
center $0$ and radius $\|x\|$. We have for $\lambda\in \mathbb{C}$
and $z, z'\in \mathbb{C}^d,\,
 E_k(z,z')=E_k(z',z)$, $E_k(\lambda z,z')=E_k(z,\lambda z')$ and
 $|E_k(x,iy)| \leq 1$ for $x, y
\in \mathbb{R}^d$. It was shown in [14, 17] that
\begin{eqnarray}\int_{S^{d-1}}E_k(ix,z)\,w_k
(z)d\sigma(z)=d_k\;j_{\gamma +\frac{d}{2}-1}(\|x\|),\quad x\in
 \mathbb{R}^d,
 \end{eqnarray} where $j_{\gamma +\frac{d}{2}-1}$ is the normalized Bessel function of the first kind
and order $\gamma + \frac{d}{2}-1$.\\

The Dunkl transform $\mathcal{F}_k$ is defined for $f \in
\mathcal{D}( \mathbb{R}^d)$ by
$$\mathcal{F}_k(f)(x) =c_k\int_{\mathbb{R}^d}f(y) E_k(-ix, y)w_k(y)dy,\quad
x \in \mathbb{R}^d.$$  We list some known properties of this
transform:
\begin{itemize}
\item[i)] The Dunkl transform of a function $f
\in L^1_k( \mathbb{R}^d)$ has the following basic property
\begin{eqnarray*}\| \mathcal{F}_k(f)\|_{\infty,k} \leq
 \|f\|_{ 1,k}\;. \end{eqnarray*}
\item[ii)] The Schwartz space $\mathcal{S}( \mathbb{R}^d)$ is
invariant under the Dunkl transform $\mathcal{F}_k\;.$
\item[iii)] When both $f$ and $\mathcal{F}_k(f)$ are in $L^1_k( \mathbb{R}^d)$,
 we have the inversion formula \begin{eqnarray*} f(x) =   \int_{\mathbb{R}^d}\mathcal{F}_k(f)(y) E_k( ix, y)w_k(y)dy,\quad
x \in \mathbb{R}^d.\end{eqnarray*}
\item[iv)] (Plancherel's theorem) The Dunkl transform on $\mathcal{S}(\mathbb{R}^d)$
 extends uniquely to an isometric isomorphism on
$L^2_k(\mathbb{R}^d)$.
\end{itemize} By i), Plancherel's theorem and the
Marcinkiewicz interpolation theorem (see [19]), we get for $f \in
L^p_k(\mathbb{R}^d)$ with $1\leq p\leq 2$ and $p'$ such that
$\frac{1}{p}+\frac{1}{p'}=1$,
\begin{eqnarray}
\|\mathcal{F}_k(f)\|_{p',k}\leq c\,\|f\|_{p,k}.
\end{eqnarray}
The Dunkl transform of a function in $L^1_k( \mathbb{R}^d)^{rad}$
 is also radial and could be expressed via the Hankel transform. More precisely, according to [14], we have the
following results:
\begin{eqnarray}\mathcal{F}_k(f)(x) &=&\int^{+\infty}_0
\Big( \int_{S^{d-1}}E_k(-ix, y)w_k(y)d\sigma(y)\Big)
F(r)r^{2\gamma+d-1}dr\nonumber\\&=&   \mathcal{H}_{\gamma +
\frac{d}{2}-1} (F)(\|x\|),\quad x \in \mathbb{R}^d, \end{eqnarray}
where $F$ is the function defined on $[ 0, + \infty)$ by $F(\|x\|) =
f(x),\; x \in \mathbb{R}^d$ and $\mathcal{H}_{\gamma +
\frac{d}{2}-1}$ is the Hankel transform of order $\gamma +
\frac{d}{2}-1$.\\ For $\varphi\in\mathcal{S}(\mathbb{R}^d)^{rad}$
and $x\in \mathbb{R}^d\,,$ we have
$\mathcal{F}_k^{-1}(\varphi)(x)=\mathcal{F}_k(\varphi)(-x)=\mathcal{F}_k(\varphi)(x)$.\\

K. Trim\`eche has introduced in [22] the Dunkl translation operators
$\tau_x$, $x\in\mathbb{R}^d$, on $\mathcal{E}( \mathbb{R}^d)\;.$ For
$f\in \mathcal{S}( \mathbb{R}^d)$ and $x, y\in\mathbb{R}^d$, we have
\begin{eqnarray}\mathcal{F}_k(\tau_x(f))(y)=E_k(i x, y)\mathcal{F}_k(f)(y).\end{eqnarray}
 Notice that for all $x,y\in\mathbb{R}^d$,
$\tau_x(f)(y)=\tau_y(f)(x)$ and for fixed $x\in\mathbb{R}^d$
\begin{eqnarray}\tau_x \mbox{\; is a continuous linear mapping from \;}
\mathcal{E}( \mathbb{R}^d) \mbox{\;
into\;}\mathcal{E}(\mathbb{R}^d)\,.\end{eqnarray} As an operator on
$L_k^2(\mathbb{R}^d)$, $\tau_x$ is bounded. A priori it is not at
all clear whether the translation operator can be defined for $L^p$-
functions with $p$ different from 2. However, according to ([18],
Theorem 3.7), the operator $\tau_x$ can be extended to
$L^p_k(\mathbb{R}^d)^{rad},$ $1 \leq p \leq 2$ and we have
\begin{eqnarray}\|\tau_x(f)\|_{p,k} \leq \|f\|_{p,k}\;\;\mbox{for} \;\;f \in
L^p_k(\mathbb{R}^d)^{rad}.\end{eqnarray}

The Dunkl convolution product $\ast_k$ of two functions $f$ and $g$
in $L^2_k(\mathbb{R}^d)$ is given by
$$(f\; \ast_k g)(x) = \int_{\mathbb{R}^d} \tau_x (f)(-y) g(y) w_k(y)dy,\quad
x \in \mathbb{R}^d .$$ The Dunkl convolution product is commutative
and for $f,\,g \in \mathcal{D}( \mathbb{R}^d)$ we have
\begin{eqnarray}\mathcal{F}_k(f\,\ast_k\, g) =
\mathcal{F}_k(f) \mathcal{F}_k(g).\end{eqnarray}It was shown in
([18], Theorem 4.1) that when $g$ is a bounded function in $L^1_k(
\mathbb{R}^d)^{rad}$, then
\begin{eqnarray}(f\; \ast_k g)(x) = \int_{\mathbb{R}^d}  f(y) \tau_x (g)(-y) w_k(y)dy,\quad
x \in \mathbb{R}^d , \end{eqnarray} initially defined on the
intersection of $L^1_k(\mathbb{R}^d)$ and $L^2_k(\mathbb{R}^d)$
extends to $L^p_k(\mathbb{R}^d)$, $1\leq p\leq +\infty$ as a bounded
operator. In particular, \begin{eqnarray}\|f \ast_k g\|_{p,k} \leq
\|f\|_{p,k} \|g\|_{1,k}\,.\end{eqnarray}

In the case $d=1$, $G= \mathbb{Z}_2=\{id,-id\}$ the corresponding
reflection group acting on $\mathbb{R}$ and
$\gamma=k(\alpha)=\alpha+\frac{1}{2}>0$, the Dunkl operator on the
real line is defined by
$$T_1(f)(x)= \frac{df}{dx}(x) + \frac{2\alpha+1}{x} \left[\frac{f(x)
- f(-x)}{2}\right],\quad f \in \mathcal{E}( \mathbb{R}).$$ For
$\lambda \in \mathbb{C}$, the Dunkl kernel is given by
\begin{eqnarray}E_k(\lambda x) = j_\alpha(i\lambda x) + \frac{\lambda x}
{2(\alpha+1)} j_{\alpha+1} (i\lambda x),\quad x \in
\mathbb{R}\end{eqnarray}We have \begin{eqnarray}w_k(x) =
\displaystyle{\frac{|x|^{2\alpha+1}}{2^{\alpha +1}\Gamma(\alpha
+1)}}.\end{eqnarray} For all $x \in \mathbb{R}$, the Dunkl
translation operator $\tau_x$ extends to $L^p_k(\mathbb{R})$,$\, p
\geq 1$ and we have for  $f \in L^p_k(\mathbb{R})$
\begin{eqnarray}\|\tau_x(f)\|_{p,k} \leq 3\|f\|_{p,k}.\end{eqnarray}
\section {Characterization by decomposition }
In this section, we characterize the Besov-Dunkl spaces by
decomposition of functions. Before, we start with some useful
remarks and propositions.
\begin{remark} By ([3], Proposition 1 and 2),
it follows that for $ \beta>0$ and $1\leq p , q \leq + \infty$, the
Besov-Dunkl space $\mathcal{B}\mathcal{D}_{p,q}^{\beta,k}$ is
independent of the choice of the sequence $ (\varphi_{j})_{j\in
\mathbb{N}}$ in $ \Phi(\mathbb{R}^d)$. This space coincide on
$L^p_k(\mathbb{R}^d)$ with the homogeneous Besov-Dunkl space.
\end{remark}
 \begin{remark} It was shown in ([3], Proposition 3) that for $1\leq p , q <+ \infty$ and $\beta>0$, we have
the density of $\mathcal{S}(\mathbb{R}^d)$ in
$\mathcal{B}\mathcal{D}_{p,q}^{\beta,k}.$\end{remark}
 Now, in order to prove
the following propositions, we recall that for $1\leq p\leq 2$,
$\mathfrak{M}_p(\mathbb{R}^d)$ is the class of sequences $
(g_{j})_{j\in \mathbb{N}}$ of functions in $L^p_k(\mathbb{R}^d)$
such that $\mbox{supp}\,\mathcal{F}_k(g_{0}) \subset B(0,2)$ and
$\mbox{supp}\,\mathcal{F}_k(g_{j}) \subset A_j=\{x\in
\mathbb{R}^d\,;\,2^{j-1}\leq \|x\|\leq 2^{j+1}\}$ for $j\in
\mathbb{N\backslash}\{0\}.$
\begin{proposition} Let $\beta>0$,
$1\leq p \leq 2$, $1\leq q < + \infty$ and $ (\varphi_{j})_{j\in
\mathbb{N}}\in \Phi(\mathbb{R}^d)$. If
$f\in\mathcal{B}\mathcal{D}_{p,q}^{\beta,k}$ then one has
\begin{itemize}
\item[i)] $\forall\, n, m\in \mathbb{N}$, $\;f_{n,m}=\displaystyle{\sum_{s=n}^{n+m}} \varphi_s
\ast_k
f\in\mathcal{B}\mathcal{D}_{p,q}^{\beta,k}\cap\mathcal{E}(\mathbb{R}^d).$
\item[ii)] $(\varphi_{j}\ast_k f)_{j\in
\mathbb{N}}\in\mathfrak{M}_p(\mathbb{R}^d)$ \mbox{\quad and \quad}
$f=\displaystyle{\sum_{j=0}^{+\infty}} \varphi_j \ast_k f \,,\,$ in
$L^p_k(\mathbb{R}^d)$.
\end{itemize}\end{proposition}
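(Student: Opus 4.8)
The plan is to derive both parts from the support calculus of the Dunkl transform together with the Young-type bound $(2.11)$.

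\textbf{Part (i).} Write $f_{n,m}=\psi\ast_k f$ with $\psi=\sum_{s=n}^{n+m}\varphi_s\in\mathcal{S}(\mathbb{R}^d)^{rad}$. Each $\varphi_s\ast_k f$ lies in $L^p_k(\mathbb{R}^d)$ by $(2.11)$, so $f_{n,m}\in L^p_k(\mathbb{R}^d)$; for smoothness I would use $(2.10)$ to write $f_{n,m}(x)=\int_{\mathbb{R}^d}f(y)\,\tau_x\psi(-y)\,w_k(y)\,dy$ and differentiate under the integral sign, the differentiation being justified by the standard bounds on the Dunkl translate of a radial Schwartz function, so that $f_{n,m}\in\mathcal{E}(\mathbb{R}^d)$. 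For the Besov membership, the key observation is that $\mathcal{F}_k(\varphi_j\ast_k\varphi_s)=\mathcal{F}_k(\varphi_j)\,\mathcal{F}_k(\varphi_s)$ is supported in $\mathrm{supp}\,\mathcal{F}_k(\varphi_j)\cap\mathrm{supp}\,\mathcal{F}_k(\varphi_s)$, which by the shape of the sets $B(0,2)$ and $A_j$ is empty once $|j-s|\ge 3$; hence $\varphi_j\ast_k\varphi_s=0$, and so $\varphi_j\ast_k\varphi_s\ast_k f=0$, for $|j-s|\ge 3$, whence
$$\varphi_j\ast_k f_{n,m}=\sum_{\substack{n\le s\le n+m\\ |s-j|\le 2}}\varphi_j\ast_k(\varphi_s\ast_k f),$$
a sum of at most five terms. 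By $(2.11)$ and condition (ii) of $\Phi(\mathbb{R}^d)$, writing $M_0=\sup_j\|\varphi_j\|_{1,k}<+\infty$, each term has $L^p_k$-norm $\le M_0\|\varphi_s\ast_k f\|_{p,k}$, so $2^{j\beta}\|\varphi_j\ast_k f_{n,m}\|_{p,k}\le M_0\,2^{2\beta}\sum_{|s-j|\le 2}2^{s\beta}\|\varphi_s\ast_k f\|_{p,k}$; raising to the power $q$, summing over $j$, and using that each $s$ occurs for at most five values of $j$ gives $\sum_j(2^{j\beta}\|\varphi_j\ast_k f_{n,m}\|_{p,k})^q\le c\sum_s(2^{s\beta}\|\varphi_s\ast_k f\|_{p,k})^q<+\infty$ since $f\in\mathcal{B}\mathcal{D}_{p,q}^{\beta,k}$. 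Thus $f_{n,m}\in\mathcal{B}\mathcal{D}_{p,q}^{\beta,k}\cap\mathcal{E}(\mathbb{R}^d)$.

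\textbf{Part (ii).} That $(\varphi_j\ast_k f)_{j}\in\mathfrak{M}_p(\mathbb{R}^d)$ is immediate: $f\in\mathcal{B}\mathcal{D}_{p,q}^{\beta,k}\subset L^p_k(\mathbb{R}^d)$ gives $\varphi_j\ast_k f\in L^p_k(\mathbb{R}^d)$ by $(2.11)$, and $\mathcal{F}_k(\varphi_j\ast_k f)=\mathcal{F}_k(\varphi_j)\,\mathcal{F}_k(f)$ — which I would extend from $(2.9)$ to $f\in L^p_k(\mathbb{R}^d)$ by approximating $f$ in $L^p_k$ by functions of $\mathcal{D}(\mathbb{R}^d)$ and using $(2.4)$ and $(2.11)$ — is supported in $\mathrm{supp}\,\mathcal{F}_k(\varphi_j)$, i.e. in $B(0,2)$ for $j=0$ and in $A_j$ for $j\ge 1$. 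Next, since $\beta>0$, Hölder's inequality in $j$ (with $q'$ the conjugate exponent of $q$) gives
$$\sum_{j\ge 0}\|\varphi_j\ast_k f\|_{p,k}\le\Big(\sum_{j\ge 0}2^{-j\beta q'}\Big)^{1/q'}\Big(\sum_{j\ge 0}(2^{j\beta}\|\varphi_j\ast_k f\|_{p,k})^q\Big)^{1/q}<+\infty$$
($q=1$ handled directly from the boundedness of $(2^{j\beta}\|\varphi_j\ast_k f\|_{p,k})_j$), so $\sum_{j\ge 0}\varphi_j\ast_k f$ converges absolutely in $L^p_k(\mathbb{R}^d)$ to some $g\in L^p_k(\mathbb{R}^d)$, and $f\mapsto g$ is bounded from $\mathcal{B}\mathcal{D}_{p,q}^{\beta,k}$ into $L^p_k(\mathbb{R}^d)$.

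The remaining and only genuinely delicate point is $g=f$, the issue being that we are in $L^p_k$ with $1\le p\le 2$ rather than in $L^2_k$, where Plancherel's theorem would give it at once. I would argue on the transform side: by $(2.4)$, $\mathcal{F}_k(g)=\lim_N\sum_{j=0}^N\mathcal{F}_k(\varphi_j)\,\mathcal{F}_k(f)$ in $L^{p'}_k(\mathbb{R}^d)$; but at each point at most three of the $\mathcal{F}_k(\varphi_j)$ are nonzero, $\big|\sum_{j=0}^N\mathcal{F}_k(\varphi_j)\big|\le 3M_0$ everywhere, and $\sum_{j=0}^N\mathcal{F}_k(\varphi_j)\equiv 1$ on $\{x:\|x\|\le 2^{N-1}\}$ by condition (iii), so $\sum_{j=0}^N\mathcal{F}_k(\varphi_j)\,\mathcal{F}_k(f)\to\mathcal{F}_k(f)$ pointwise a.e. and dominatedly (by $3M_0|\mathcal{F}_k(f)|\in L^{p'}_k$), whence $\mathcal{F}_k(g)=\mathcal{F}_k(f)$, and then $g=f$ by injectivity of $\mathcal{F}_k$ on $L^p_k(\mathbb{R}^d)$ for $1\le p\le 2$ (clear for $p=1$ by the inversion formula and for $p=2$ by Plancherel, and reducible to these by convolving with a band-limited approximate identity). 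A self-contained alternative: by Remark 3.1 one may replace $(\varphi_j)_j$ by a dyadic-dilation resolution $(\widetilde\varphi_j)_j$ — with $\mathcal{F}_k\big(\sum_{j\le N}\widetilde\varphi_j\big)$ a fixed smooth radial cutoff equal to $1$ near $0$, dilated to scale $2^{N}$ — for which $\sum_{j\le N}\widetilde\varphi_j$ is a genuine approximate identity ($\sup_N\big\|\sum_{j\le N}\widetilde\varphi_j\big\|_{1,k}<+\infty$ by homogeneity of $w_k$, and $\sum_{j\le N}\widetilde\varphi_j\ast_k h\to h$ in $L^p_k$ for $h\in\mathcal{S}(\mathbb{R}^d)$, visibly on the transform side); by the density of $\mathcal{S}(\mathbb{R}^d)$ in $\mathcal{B}\mathcal{D}_{p,q}^{\beta,k}$ (Remark 3.2) and the uniform $L^1_k$-bound this upgrades to $f=\sum_{i\ge 0}\widetilde\varphi_i\ast_k f$ in $L^p_k(\mathbb{R}^d)$, and one transfers back to $(\varphi_j)_j$ via $\big(\sum_{|i-j|\le 2}\varphi_j\big)\ast_k\widetilde\varphi_i=\widetilde\varphi_i$ (valid since $\sum_{|i-j|\le 2}\mathcal{F}_k(\varphi_j)\equiv 1$ on $\mathrm{supp}\,\mathcal{F}_k(\widetilde\varphi_i)$), obtaining $g=\sum_{j\ge 0}\varphi_j\ast_k f=\sum_{i\ge 0}\widetilde\varphi_i\ast_k f=f$. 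I expect this identification of the sum with $f$ — the passage from the easy $L^2$ convergence to $L^p$ — to be the main obstacle; part (i) and the $\mathfrak{M}_p$-membership in (ii) are routine book-keeping with $(2.9)$--$(2.11)$.
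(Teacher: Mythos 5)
Your proposal is correct, and parts (i) and the first half of (ii) run essentially parallel to the paper's proof: the same support bookkeeping for $\mathcal{F}_k(\varphi_j)$ (the paper simply notes that $\varphi_j\ast_k f_{n,m}=0$ for $j>n+m+1$, where your "$|s-j|\le 2$, at most five terms" count is the same observation made more explicit), the same appeal to (2.7) and (2.10) for smoothness, and the identical H\"older-in-$j$ estimate for the convergence of $\sum_j\varphi_j\ast_k f$ in $L^p_k(\mathbb{R}^d)$. Where you genuinely diverge is the crux, the identification $f=\sum_{j}\varphi_j\ast_k f$. The paper first proves the identity for $\psi\in\mathcal{S}(\mathbb{R}^d)$ by working in $L^2_k$ (Plancherel plus $\sum_j\mathcal{F}_k(\varphi_j)\equiv 1$), and then transfers it to all $f\in\mathcal{B}\mathcal{D}_{p,q}^{\beta,k}$ using the density of $\mathcal{S}(\mathbb{R}^d)$ in $\mathcal{B}\mathcal{D}_{p,q}^{\beta,k}$ (Remark 3.2) together with the continuity, just established, of $f\mapsto\sum_j\varphi_j\ast_k f$ from $\mathcal{B}\mathcal{D}_{p,q}^{\beta,k}$ to $L^p_k(\mathbb{R}^d)$. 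Your primary route instead stays on the transform side: the partial sums $\sum_{j\le N}\mathcal{F}_k(\varphi_j)\mathcal{F}_k(f)$ are uniformly bounded by $3M_0|\mathcal{F}_k(f)|$, equal $\mathcal{F}_k(f)$ on $\{\|x\|\le 2^{N-1}\}$, and converge to $\mathcal{F}_k(g)$ in $L^{p'}_k$ by (2.4), whence $\mathcal{F}_k(g)=\mathcal{F}_k(f)$ and $g=f$ by injectivity of $\mathcal{F}_k$ on $L^p_k(\mathbb{R}^d)$. Both arguments are sound; the trade-off is which external input one is willing to lean on. The paper imports Remark 3.2 (density of $\mathcal{S}$ in the Besov--Dunkl space, a nontrivial result from [3] that is the reason the proposition is stated only for $q<+\infty$), while you import injectivity of the Dunkl transform on $L^p_k$ for $1\le p\le 2$ — standard, and your reduction to the $L^1$/$L^2$ cases by convolving with a band-limited approximate identity is a legitimate sketch, but it deserves the half-sentence of justification you give it. Your second, alternative route (dilation resolution plus approximate identity, then density) is closer in spirit to the paper's but replaces the paper's continuity-of-the-sum argument by a uniform $L^1_k$ bound on the partial sums; it works equally well. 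One cosmetic point: for $p=1$ the domination argument in $L^{p'}=L^\infty$ should be phrased as agreement of the uniform limit with the pointwise limit rather than as dominated convergence, but this does not affect the conclusion.
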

\begin{proof} Let $ (\varphi_{j})_{j\in
\mathbb{N}}\in \Phi(\mathbb{R}^d)$ and $f\in
\mathcal{B}\mathcal{D}_{p,q}^{\beta,k}$ with $\beta>0$, $1\leq p
\leq 2$ and $1\leq q < + \infty$.\\i) Take for $ n, m\in
\mathbb{N}$, $\;f_{n,m}=\displaystyle{\sum_{s=n}^{n+m}} \varphi_s
\ast_k f$. By the triangle inequality, (2.11) and the property (ii)
for $ ( \varphi_{j})_{j\in \mathbb{N}}$, we have $f_{n,m}\in
L^p_k(\mathbb{R}^d)$. Now, using the properties (i), (ii), (iii) for
$ ( \varphi_{j})_{j\in \mathbb{N}}$, the triangle inequality and
(2.11) again, we get
\begin{eqnarray*}\sum_{j=0}^{+\infty}(2^{j\beta}\| \varphi_j \ast_k f_{n,m}\|_{p,k})^q
&=& \sum_{j=0}^{n+m+1}(2^{j\beta}\| \varphi_j \ast_k
f_{n,m}\|_{p,k})^q\\&\leq& c\sum_{j=0}^{n+m+1}(2^{j\beta}\|
\varphi_j \ast_k f \|_{p,k})^q<+\infty.
\end{eqnarray*} We conclude from (2.7) and (2.10) that
$f_{n,m}\in\mathcal{E}(\mathbb{R}^d)$.
\\ii) Using (2.9) and (2.11) and the property (ii) for $ ( \varphi_{j})_{j\in \mathbb{N}}$, it's clear that $ (\varphi_{j}\ast_k f
)_{j\in \mathbb{N}}$ is in $\mathfrak{M}_p(\mathbb{R}^d)$. For
$r,\,s\in \mathbb{N}$ with $r<s$, we have from the H\"older
inequality that\begin{eqnarray*}
 \|\sum_{j=r}^{s} \varphi_j \ast_k
f\|_{p,k}&\leq &\sum_{j=r}^{s}\| \varphi_j \ast_k f\|_{p,k}\\&\leq
&\Big(\sum_{j=r}^{s} 2^{-j\beta
q'}\Big)^{\frac{1}{q'}}\Big(\sum_{j=r}^{s}(2^{j\beta}\| \varphi_j
\ast_k f\|_{p,k})^q \Big)^{\frac{1}{q}},\end{eqnarray*}where $q'$ is
the conjugate of $q$. Then the series $\displaystyle{
  \sum_{j=0}^{+\infty} \varphi_j \ast_k f} \;$ converges
  in $L^p_k(\mathbb{R}^d)$ and the map $ f \mapsto \displaystyle{
  \sum_{j=0}^{+\infty} \varphi_j \ast_k f} \;$  is
  continuous from $ \mathcal{B}\mathcal{D}_{p,q}^{\beta,k} $ into
  $L^p_k(\mathbb{R}^d)$.\\ Take $\psi \in \mathcal{S}(\mathbb{R}^d)$,
  the series $\displaystyle{
  \sum_{j=0}^{+\infty} \varphi_j \ast_k \psi} \;$
  converges, in particular in $L^2_k(\mathbb{R}^d)$, then by the
  Plancherel theorem and (2.9), we deduce that $\displaystyle{
  \sum_{j=0}^{+\infty} \mathcal{F}_k(\varphi_j)  \mathcal{F}_k(\psi)} \;$ converges in
  $L^2_k(\mathbb{R}^d)$. Since \\$\displaystyle{
  \sum_{j=0}^{+\infty} \mathcal{F}_k(\varphi_j) (x)=1},$ $x\in \mathbb{R}^d$, we
  conclude that $\displaystyle{\sum_{j=0}^{+\infty} \mathcal{F}_k(\varphi_j)  \mathcal{F}_k(\psi)}=
  \mathcal{F}_k(\psi)$, which gives $\displaystyle{\sum_{j=0}^{+\infty} \varphi_j \ast_k \psi=\psi
  } $.  Using
  the fact that
   $\mathcal{S}(\mathbb{R}^d)$ is dense in $
  \mathcal{B}\mathcal{D}_{p,q}^{\beta,k}$ (see Remark 3.2) and $
  \mathcal{B}\mathcal{D}_{p,q}^{\beta,k}$ is continuously included
  in $L^p_k(\mathbb{R}^d)$, we get
  \begin{eqnarray}  f=
  \sum_{j=0}^{+\infty} \varphi_j \ast_k f,\end{eqnarray}which proves the results ii). This completes the
proof. \end{proof} \\\\
For $f\in\mathcal{B}\mathcal{D}_{p,q}^{\beta,k}$, we put $
\|f\|_{\mathcal{B}\mathcal{D}_{p,q}^{\beta,k}}=\displaystyle{
\Big(\sum_{j\in\mathbb{N}}(2^{j\beta}\|\varphi_{j }\ast_k
f\|_{p,k})^q \Big)^{\frac{1}{q}}}.$
\begin{proposition} Let $\beta>0$, $1\leq p \leq 2$ and $1\leq q < +
\infty$. Then
\begin{itemize}
\item[1)]For all compact set $K$ in
$\mathbb{R}^d\backslash\{0\}$, the norms $\|\,.\,\|_{p,k}$ and
$\|\,.\,\|_{\mathcal{B}\mathcal{D}_{p,q}^{\beta,k}}$ are  equivalent
on the set $F_p(K)$ of all functions $f\in L^p_k(\mathbb{R}^d)$ such
that $\mbox{supp}\,\mathcal{F}_k(f)\subseteq K $.\item[2)] The
subspace $F_p$ of functions $f \in L^p_k(\mathbb{R}^d)$ such that
$\mbox{supp}\,\mathcal{F}_k(f)$ is a compact set in
$\mathbb{R}^d\backslash\{0\}$ is dense in
$\mathcal{B}\mathcal{D}_{p,q}^{\beta,k}$.\end{itemize}\end{proposition}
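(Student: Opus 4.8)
The plan is to prove the two assertions in turn, the first by a spectral‑localisation argument and the second by approximating $f$ first in the high frequencies and then in the low ones. \textbf{Part 1.} Fix a compact $K\subset\mathbb{R}^d\setminus\{0\}$; then $K\subseteq\{x\in\mathbb{R}^d:\ a\le\|x\|\le b\}$ for some $0<a<b$, so the set $J=J(K)=\{j\in\mathbb{N}:\ A_j\cap K\neq\emptyset\}$ (with $A_0:=B(0,2)$) is finite. Let $(\varphi_j)_{j}\in\Phi(\mathbb{R}^d)$ and $f\in F_p(K)$. By the convolution theorem (2.9), valid for $\varphi_j\in\mathcal{S}(\mathbb{R}^d)^{rad}$ and $f\in L^p_k(\mathbb{R}^d)$, $\mathcal{F}_k(\varphi_j\ast_k f)=\mathcal{F}_k(\varphi_j)\mathcal{F}_k(f)$ is supported in $A_j\cap K$, so the injectivity of $\mathcal{F}_k$ gives $\varphi_j\ast_k f=0$ for $j\notin J$. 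By (2.12) and property (ii) of $(\varphi_j)_j$ one has $\|\varphi_j\ast_k f\|_{p,k}\le\|\varphi_j\|_{1,k}\|f\|_{p,k}\le c\,\|f\|_{p,k}$, and summing the $q$‑th powers over the finite set $J$ yields $\|f\|_{\mathcal{B}\mathcal{D}_{p,q}^{\beta,k}}\le c_K\|f\|_{p,k}$; in particular $F_p(K)\subset\mathcal{B}\mathcal{D}_{p,q}^{\beta,k}$, so Proposition 3.1~(ii) applies and $f=\sum_{j\in\mathbb{N}}\varphi_j\ast_k f=\sum_{j\in J}\varphi_j\ast_k f$ in $L^p_k(\mathbb{R}^d)$. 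The triangle inequality followed by H\"older over the finite set $J$ then gives $\|f\|_{p,k}\le\sum_{j\in J}\|\varphi_j\ast_k f\|_{p,k}\le c'_K\|f\|_{\mathcal{B}\mathcal{D}_{p,q}^{\beta,k}}$, which is the reverse estimate; hence the two norms are equivalent on $F_p(K)$.

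\textbf{Part 2.} Let $f\in\mathcal{B}\mathcal{D}_{p,q}^{\beta,k}$ and put $f_N=\sum_{j=0}^N\varphi_j\ast_k f$, which by Proposition 3.1~(i) lies in $\mathcal{B}\mathcal{D}_{p,q}^{\beta,k}\cap\mathcal{E}(\mathbb{R}^d)$ and has $\mathcal{F}_k(f_N)$ supported in the compact set $\{\|x\|\le 2^{N+1}\}$. I would first check that $f_N\to f$ in $\mathcal{B}\mathcal{D}_{p,q}^{\beta,k}$: from $f-f_N=\sum_{s>N}\varphi_s\ast_k f$ (Proposition 3.1~(ii)) and the fact that $\mathcal{F}_k(\varphi_j)$ and $\mathcal{F}_k(\varphi_s)$ have disjoint supports when $|j-s|\ge 3$, one gets $\varphi_j\ast_k(f-f_N)=0$ for $j\le N-2$, while for $j\ge N-1$ property (iii) (which forces $\sum_{|s-j|\le 2}\mathcal{F}_k(\varphi_s)=1$ on $A_j$) together with (2.12) gives $\|\varphi_j\ast_k(f-f_N)\|_{p,k}\le c\sum_{|s-j|\le 2}\|\varphi_s\ast_k f\|_{p,k}$; summing yields $\|f-f_N\|_{\mathcal{B}\mathcal{D}_{p,q}^{\beta,k}}^q\le c\sum_{s\ge N-3}(2^{s\beta}\|\varphi_s\ast_k f\|_{p,k})^q\to 0$. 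Now $\mathcal{F}_k(f_N)$ is compactly supported but may contain the origin, so $f_N\notin F_p$ in general; to remedy this, decompose $f_N=g+\sum_{j=1}^N\varphi_j\ast_k f$ with $g:=\varphi_0\ast_k f$, observe that $\sum_{j=1}^N\varphi_j\ast_k f$ already belongs to $F_p$ (its Dunkl transform is supported in $\{1\le\|x\|\le 2^{N+1}\}$), and reduce the problem to approximating the single function $g$, whose Dunkl transform is supported in $B(0,2)$.

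To do that, fix $\Lambda\in\mathcal{S}(\mathbb{R}^d)^{rad}$ with $\mathcal{F}_k(\Lambda)=1$ on $B(0,1)$ and $\mbox{supp}\,\mathcal{F}_k(\Lambda)\subseteq B(0,2)$, set $\Lambda_\varepsilon(x)=\varepsilon^{2\gamma+d}\Lambda(\varepsilon x)$ for $\varepsilon>0$ — so that $\mathcal{F}_k(\Lambda_\varepsilon)=\mathcal{F}_k(\Lambda)(\cdot/\varepsilon)$ equals $1$ on $B(0,\varepsilon)$, is supported in $B(0,2\varepsilon)$, and $\|\Lambda_\varepsilon\|_{1,k}=\|\Lambda\|_{1,k}$ by homogeneity of $w_k$ — and put $h_\varepsilon:=g-\Lambda_\varepsilon\ast_k g$. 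Then $\mathcal{F}_k(h_\varepsilon)=(1-\mathcal{F}_k(\Lambda_\varepsilon))\mathcal{F}_k(g)$ is supported in $\{\varepsilon\le\|x\|\le 2\}$, a compact subset of $\mathbb{R}^d\setminus\{0\}$, so $h_\varepsilon\in F_p$; moreover $g-h_\varepsilon=\Lambda_\varepsilon\ast_k g$ has Dunkl transform supported in $B(0,2\varepsilon)$, so for $\varepsilon<1/2$ every term $\varphi_j\ast_k(g-h_\varepsilon)$ with $j\ge 1$ vanishes and, exactly as in Part 1, $\|g-h_\varepsilon\|_{\mathcal{B}\mathcal{D}_{p,q}^{\beta,k}}=\|\varphi_0\ast_k(\Lambda_\varepsilon\ast_k g)\|_{p,k}\le c\,\|\Lambda_\varepsilon\ast_k g\|_{p,k}$. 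Combining with $f_N\to f$ (and noting $h_\varepsilon+\sum_{j=1}^N\varphi_j\ast_k f=f_N+(h_\varepsilon-g)\in F_p$, at distance $\le\|f-f_N\|_{\mathcal{B}\mathcal{D}_{p,q}^{\beta,k}}+\|g-h_\varepsilon\|_{\mathcal{B}\mathcal{D}_{p,q}^{\beta,k}}$ from $f$) then exhibits elements of $F_p$ converging to $f$ in $\mathcal{B}\mathcal{D}_{p,q}^{\beta,k}$.

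\textbf{The main obstacle} is the last estimate: one must show $\|\Lambda_\varepsilon\ast_k g\|_{p,k}\to 0$ as $\varepsilon\to 0$. On the Fourier side, $\mathcal{F}_k(\Lambda_\varepsilon\ast_k g)=\mathcal{F}_k(\Lambda_\varepsilon)\mathcal{F}_k(g)$ is dominated by $|\mathcal{F}_k(g)|\in L^{p'}_k(\mathbb{R}^d)$ and supported in the shrinking ball $B(0,2\varepsilon)$, so $\|\mathcal{F}_k(\Lambda_\varepsilon\ast_k g)\|_{p',k}\to 0$ by dominated convergence; for $p=2$ this already gives the claim through Plancherel's theorem. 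For $1\le p<2$ one has instead to transfer this to the $L^p_k$‑norm of $\Lambda_\varepsilon\ast_k g$ itself — equivalently, to prove that these ``dilating‑out'' convolutions tend to $0$ in $L^p_k$ for $g\in L^p_k$. Since $g\mapsto\Lambda_\varepsilon\ast_k g$ is uniformly bounded on $L^p_k$ by (2.12), it suffices to verify this on a dense subclass such as $\mathcal{D}(\mathbb{R}^d)$, where one can use the support‑spreading behaviour of the Dunkl translations together with (2.14)/(2.8) to produce an explicit decay estimate for $\Lambda_\varepsilon\ast_k g$. This is the technically delicate point of the whole argument, and the place where the restriction on $p$ and the fine $L^p_k$‑mapping properties of $\mathcal{F}_k$ have to be used with care.
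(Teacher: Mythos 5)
Your Part~1 is correct and coincides with the paper's argument: only finitely many of the annuli $A_j$ meet $K$, so all but finitely many blocks $\varphi_j\ast_k f$ vanish, (2.11) gives $\|f\|_{\mathcal{B}\mathcal{D}_{p,q}^{\beta,k}}\leq c_K\|f\|_{p,k}$, and the reconstruction $f=\sum_{j\in J}\varphi_j\ast_k f$ from Proposition~3.1 together with H\"older over the finite index set gives the converse (your citation of (2.12) should read (2.11), but that is cosmetic).

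Part~2 is where the genuine gap lies, and you have located it yourself. The paper takes $f_n=\sum_{s=0}^{n}\varphi_s\ast_k f$, proves $f_n\to f$ in $\mathcal{B}\mathcal{D}_{p,q}^{\beta,k}$ exactly as you do, and simply asserts $f_n\in F_p$; you rightly object that $\mathcal{F}_k(\varphi_0\ast_k f)=\mathcal{F}_k(\varphi_0)\mathcal{F}_k(f)$ may be nonzero near the origin. But your repair reduces everything to the unproved claim $\|\Lambda_\varepsilon\ast_k g\|_{p,k}\to 0$, and this is not merely ``technically delicate'': for $p=1$ it fails in general. Indeed, with $g=\varphi_0\ast_k f\in L^1_k(\mathbb{R}^d)$, property i) of the Dunkl transform gives
$$\|\Lambda_\varepsilon\ast_k g\|_{1,k}\;\geq\;\|\mathcal{F}_k(\Lambda_\varepsilon\ast_k g)\|_{\infty,k}\;\geq\;|\mathcal{F}_k(\Lambda_\varepsilon)(0)|\,|\mathcal{F}_k(\varphi_0)(0)|\,|\mathcal{F}_k(f)(0)|\;=\;|\mathcal{F}_k(f)(0)|,$$
since $\mathcal{F}_k(\Lambda_\varepsilon)(0)=1$ by your construction and $\mathcal{F}_k(\varphi_0)(0)=\sum_{j}\mathcal{F}_k(\varphi_j)(0)=1$ by properties (i) and (iii). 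Taking $f$ a Gaussian, which belongs to $\mathcal{B}\mathcal{D}_{1,q}^{\beta,k}$ by Remark~3.2, one has $\mathcal{F}_k(f)(0)\neq 0$, so the left-hand side stays bounded below as $\varepsilon\to 0$. The obstruction is intrinsic rather than a defect of your method: $\mathcal{B}\mathcal{D}_{1,q}^{\beta,k}$ embeds continuously into $L^1_k(\mathbb{R}^d)$ and every $h\in F_1$ satisfies $\mathcal{F}_k(h)(0)=0$, so density of $F_1$ would force $\int f\,w_k\,dx=0$ for every $f$ in the space, which the Gaussian violates. Hence for $p=1$ no choice of high-pass correction can work with $F_p$ as literally defined (spectrum in a compact subset of $\mathbb{R}^d\setminus\{0\}$); the statement is only tenable if $F_p$ is read as ``compactly supported spectrum'', in which case the paper's $f_n$ already suffices and your extra step is unnecessary. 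For $1<p\leq 2$ your claim is plausible, but your sketch (transfer from the Fourier side, or verification on a dense subclass) is not carried out, so as written the proposal does not establish Part~2.
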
\begin{proof}
  1) If $K$ is a compact set in
$\mathbb{R}^d\backslash\{0\}$, then there exists a finite set
$I\subset \mathbb{N}$ such that $K\subset\displaystyle{\bigcup_{j\in
I}} A_j$ where we take $A_0= \{x\in \mathbb{R}^d\,;\, \|x\|\leq 2\}$
when $j=0$. This gives for $f\in F_p(K)$ and $ (\varphi_{j})_{j\in
\mathbb{N}}$ in $ \Phi(\mathbb{R}^d)$ that $\varphi_j \ast_k f=0$,
if $j>1+\mbox{max}\,I$. Hence we obtain
\begin{eqnarray*}\sum_{j=0}^{+\infty}(2^{j\beta}\| \varphi_j \ast_k
f\|_{p,k})^q &\leq &\sum_{j\in I} 2^{j\beta q}\| \varphi_j\|_{1,k}^q
\| f\|_{p,k}^q \\&\leq & c \Big(\sum_{j\in I} 2^{j\beta q}\Big)\|
f\|_{p,k}^q\leq c(K) \| f\|_{p,k}^q .
\end{eqnarray*} Conversely, if $(\varphi_{j})_{j\in \mathbb{N}}$ in
$ \Phi(\mathbb{R}^d)$ and $f\in
\mathcal{B}\mathcal{D}_{p,q}^{\beta,k}$ with
$\mbox{supp}\,\mathcal{F}_k(f)\subseteq K$, then using (3.1) we have
$\displaystyle{f=
  \sum_{j=0}^{+\infty} \varphi_j \ast_k f=
  \sum_{j\in I} \varphi_j \ast_k f}.$ Using H\"older's
inequality, it yields
\begin{eqnarray*}\| f\|_{p,k}\leq \sum_{j\in I}\| \varphi_j \ast_k f\|_{p,k}\leq \Big(\sum_{j\in I} 2^{-j\beta q'}\Big)^{\frac{1}{q'}}\Big(\sum_{j\in\mathbb{N}}(2^{j\beta}\|\varphi_{j }\ast_k
f\|_{p,k})^q \Big)^{\frac{1}{q}}.\end{eqnarray*}
 where $q'$ is the conjugate of $q$.\\2) Assume
$f\in\mathcal{B}\mathcal{D}_{p,q}^{\beta,k}$ and $
(\varphi_{j})_{j\in \mathbb{N}}\in \Phi(\mathbb{R}^d)$, then we put
$f_n=\displaystyle{ \sum_{s=0}^{n} \varphi_s\ast_k f}$ for
$n\in\mathbb{N\backslash}\{0\}.$ It's clear that $f_n \in F_p$. From
(Proposition 3.1, i)), we have
$f_n\in\mathcal{B}\mathcal{D}_{p,q}^{\beta,k}$, hence using the
properties (i) and (iii) for $ ( \varphi_{j})_{j\in \mathbb{N}}$, we
obtain
$$\sum_{j=0}^{+\infty}
2^{j\beta q}\|\varphi_j\ast_k (f-f_n\|_{p,k}^q\leq c \sum_{|j|\geq
n} 2^{j\beta q}\|\varphi_j\ast_k f\|_{p,k}^q.$$ Since
$f\in\mathcal{B}\mathcal{D}_{p,q}^{\beta,k}$, then we deduce that
$\;\lim_{n\rightarrow+\infty} f_n =f \mbox{\quad
in\quad}{\mathcal{B}\mathcal{D}_{p,q}^{\beta,k}}.$
\end{proof}
\\\\Let $1\leq p\leq 2$ and $1\leq q<+\infty$. In order to prove the
following theorem, we denote by
$$\|g_j \|^\star=\Big(\displaystyle{\sum_{j\in\mathbb{N}}}(2^{j\beta}\|g_{j }
\|_{p,k})^q\Big)^{\frac{1}{q}},$$ for any sequence $(g_{j})_{j\in
\mathbb{N}}$ of functions in $ L^p_k(\mathbb{R}^d)$.\begin{theorem}
If $\beta>0$, $1\leq p \leq 2$ and $1\leq q < + \infty$, then
\begin{eqnarray}\mathcal{B}\mathcal{D}_{p,q}^{\beta,k}=\Big\{f \in L^p_k(\mathbb{R}^d)&:& \exists (g_{j})_{j\in
\mathbb{N}}\in\mathfrak{M}_p(\mathbb{R}^d)\;\mbox{such
that}\;f=\sum_{j=0}^{+\infty} g_j\nonumber \\&&\mbox{in}\;
L^p_k(\mathbb{R}^d) \;\mbox{and}\; \|g_j \|^\star <+\infty
\Big\}.\end{eqnarray}\end{theorem}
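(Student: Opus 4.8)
The plan is to prove the two inclusions in (3.2) separately. Recall that, by definition, $\mathcal{B}\mathcal{D}_{p,q}^{\beta,k}$ is the set of $f\in L^p_k(\mathbb{R}^d)$ with $\|\varphi_j\ast_k f\|^\star<+\infty$ for some --- equivalently, by Remark 3.1, every --- sequence $(\varphi_j)_{j\in\mathbb{N}}\in\Phi(\mathbb{R}^d)$. The inclusion ``$\subseteq$'' is then immediate from Proposition 3.1: if $f\in\mathcal{B}\mathcal{D}_{p,q}^{\beta,k}$, part ii) of that proposition gives that $g_j:=\varphi_j\ast_k f$ defines a sequence in $\mathfrak{M}_p(\mathbb{R}^d)$ with $f=\sum_{j=0}^{+\infty}g_j$ in $L^p_k(\mathbb{R}^d)$, while $\|g_j\|^\star=\|f\|_{\mathcal{B}\mathcal{D}_{p,q}^{\beta,k}}<+\infty$ by the definition of the norm; thus $f$ belongs to the right-hand side of (3.2).

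For the reverse inclusion, let $f\in L^p_k(\mathbb{R}^d)$ with $(g_j)_{j\in\mathbb{N}}\in\mathfrak{M}_p(\mathbb{R}^d)$, $f=\sum_{j=0}^{+\infty}g_j$ in $L^p_k(\mathbb{R}^d)$ and $\|g_j\|^\star<+\infty$, and fix $(\varphi_j)_{j\in\mathbb{N}}\in\Phi(\mathbb{R}^d)$; the task is to bound $\|\varphi_i\ast_k f\|^\star$. The key is an almost-orthogonality property of the decomposition. Since $\varphi_i\in\mathcal{S}(\mathbb{R}^d)^{rad}$ is bounded and lies in $L^1_k(\mathbb{R}^d)$, the convolution $\varphi_i\ast_k g_j$ is a well-defined element of $L^p_k(\mathbb{R}^d)$ by (2.10)--(2.11); extending (2.9) to this generality, $\mathcal{F}_k(\varphi_i\ast_k g_j)=\mathcal{F}_k(\varphi_i)\mathcal{F}_k(g_j)$, which (recalling that $\mathcal{F}_k(\varphi_i)$ is a Schwartz function supported in $A_i$, with $A_0:=B(0,2)$) vanishes outside $A_i\cap A_j$. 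An elementary look at the annuli $A_j$ shows that $A_i\cap A_j=\emptyset$ whenever $|i-j|\geq 3$; since $\mathcal{F}_k(\varphi_i\ast_k g_j)\in L^{p'}_k(\mathbb{R}^d)$ by (2.4) and $\mathcal{F}_k$ is injective on $L^p_k(\mathbb{R}^d)$ for $1\leq p\leq 2$, we conclude $\varphi_i\ast_k g_j=0$ when $|i-j|\geq 3$. Commuting $\varphi_i\ast_k(\cdot)$ with the $L^p_k$-convergent series $f=\sum_j g_j$ (legitimate because $h\mapsto\varphi_i\ast_k h$ is continuous on $L^p_k(\mathbb{R}^d)$ by (2.11)), we obtain for every $i\in\mathbb{N}$ the finite sum $\varphi_i\ast_k f=\sum_{j\geq 0,\ |i-j|\leq 2}\varphi_i\ast_k g_j$.

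What remains is a routine $\ell^q$ estimate. By the triangle inequality, (2.11), and property (ii) of $(\varphi_j)_{j\in\mathbb{N}}$ one gets $\|\varphi_i\ast_k f\|_{p,k}\leq c\sum_{j\geq 0,\ |i-j|\leq 2}\|g_j\|_{p,k}$; since $2^{i\beta}\leq 2^{2\beta}\,2^{j\beta}$ for $|i-j|\leq 2$, this gives $2^{i\beta}\|\varphi_i\ast_k f\|_{p,k}\leq c\sum_{l=-2}^{2}2^{(i+l)\beta}\|g_{i+l}\|_{p,k}$, with the convention that summands carrying a negative index are zero. Taking the $\ell^q(\mathbb{N})$ norm over $i$ and using Minkowski's inequality together with the boundedness of the shift on $\ell^q(\mathbb{N})$, we arrive at $\|\varphi_i\ast_k f\|^\star\leq c\,\|g_j\|^\star<+\infty$, so $f\in\mathcal{B}\mathcal{D}_{p,q}^{\beta,k}$. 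This establishes (3.2).

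I expect the delicate point to be the first part of the reverse inclusion: justifying that the convolution identity (2.9), stated for $\mathcal{D}(\mathbb{R}^d)$, persists for $\varphi_i\ast_k g_j$ when $g_j$ is merely in $L^p_k(\mathbb{R}^d)$, and that $\mathcal{F}_k$ is injective on $L^p_k(\mathbb{R}^d)$ for $1\leq p\leq 2$ (for instance by testing the multiplication formula against Schwartz functions and using that $\mathcal{S}(\mathbb{R}^d)$ is invariant and dense). Once the almost-orthogonality relation $\varphi_i\ast_k g_j=0$ for $|i-j|\geq 3$ is secured, the remaining arguments are standard.
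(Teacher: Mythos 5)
Your proposal is correct and follows essentially the same route as the paper: the forward inclusion via Proposition 3.1 ii), and the reverse inclusion via the support/almost-orthogonality observation that $\varphi_i\ast_k g_j=0$ for distant indices (the paper keeps the three terms $|i-j|\leq 1$, you keep five, which is harmless), followed by the standard weighted $\ell^q$ estimate. The only cosmetic difference is that you spell out the justification of $\mathcal{F}_k(\varphi_i\ast_k g_j)=\mathcal{F}_k(\varphi_i)\mathcal{F}_k(g_j)$ and the injectivity of $\mathcal{F}_k$ on $L^p_k(\mathbb{R}^d)$, which the paper uses implicitly.
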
 \begin{proof} (Proposition 3.1,
ii)) shows that every $f\in \mathcal{B}\mathcal{D}_{p,q}^{\beta,k}$
can be represented by the right-hand side of (3.2). Conversely, if
$f \in L^p_k(\mathbb{R}^d)$ is given by $$f=\sum_{j=0}^{+\infty} g_j
\quad\mbox{in}\;  L^p_k(\mathbb{R}^d), \;(g_{j})_{j\in
\mathbb{N}}\in\mathfrak{M}_p(\mathbb{R}^d) \quad\mbox{and}\quad
\|g_j \|^\star <+\infty,$$ then we have for $ (\varphi_{j})_{j\in
\mathbb{N}}\in \Phi(\mathbb{R}^d)$, $$ \varphi_j\ast_k f =
\varphi_j\ast_k(g_{j-1}+g_{j}+g_{j+1}),\;j\in \mathbb{N}$$ where we
put for convenience $g_{-1}=0$. Then by (2.11) and H\"older's
inequality for $j\in \mathbb{N}$, we obtain
\begin{eqnarray*} \| \varphi_j\ast_k f \|_{p,k}^q\leq c\, 3^{q-1}
\sum_{s=j-1}^{j+1}\| g_s \|_{p,k}^q.
\end{eqnarray*} Thus summing over $j$ with weights $2^{j \beta q}$ and using the triangle inequality, we get
  \begin{eqnarray*}
 \sum_{j\in\mathbb{N}}(2^{j\beta}\|\varphi_{j }\ast_k f\|_{p,k})^q\leq
 c \sum_{j\in\mathbb{N}}(2^{j\beta}\|g_{j } \|_{p,k})^q<+\infty.
   \end{eqnarray*} Thus we obtain the result. \end{proof}
\begin{remark} Put $ \mathcal{A}$ the set of all functions $\phi \in
\mathcal{S}(\mathbb{R}^d)^{rad}$ such that
$$\,\mbox{supp}\,\mathcal{F}_k(\phi) \subset \Big\{x\in
\mathbb{R}^d\,;\,1\leq \|x\|\leq 2\} \Big\} $$ and denote by $
\mathcal{C}_{ p,q}^{\phi ,\beta,k} $, $ \beta>0$, $1\leq p , q \leq
+ \infty$, the subspace of functions $f \in L^p_k(\mathbb{R}^d)$
satisfying
\begin{eqnarray*}
\Big(\int^{+ \infty}_0 \left(\frac{\|f \ast_k \phi_t
\|_{p,k}}{t^\beta}\right)^q \, \frac{dt}{t}\Big)^{\frac{1}{q}} &<& +
\infty \;\;\;\quad if\quad q < +\infty
\end{eqnarray*} (usual modification when $q=+\infty$),
where
$\phi_t(x)=\frac{1}{t^{2(\gamma+\frac{d}{2})}}\phi(\frac{x}{t})$,
 for all $t\in (0,+\infty)$ and $x\in\mathbb{R}^d$. Then from ([3], Theorem 2),
 we have for $ \beta>0$, $1\leq p , q \leq + \infty$ and all $\phi \in\mathcal{A}$
  \begin{eqnarray}
\mathcal{B}\mathcal{D}_{p,q}^{\beta,k}\subset \mathcal{C}_{
p,q}^{\phi ,\beta,k}.\end{eqnarray}   In the case $d=1$, $G=
\mathbb{Z}_2$, $\gamma=k(\alpha)=\alpha+\frac{1}{2}>0$ and
$$T_1(f)(x)= \frac{df}{dx}(x) + \frac{2\alpha+1}{x} \left[\frac{f(x)
- f(-x)}{2}\right],\quad f \in \mathcal{E}( \mathbb{R}),$$ we can
characterize the Besov-Dunkl spaces by differences using the modulus
of continuity of second order of $f$. Put $\mathcal{H}$ the set of
all functions $ \phi \in \mathcal{S}_\ast(\mathbb{R})$ such that
$\displaystyle{\int_0^{+\infty}\phi(x)d\mu_\alpha(x)=0 }$ with
$d\mu_\alpha(x) = \displaystyle{\frac{|x|^{2\alpha+1}}{2^{\alpha
+1}\Gamma(\alpha +1)}\; dx}$ and $\mathcal{S}_\ast(\mathbb{R})$ the
space of even Schwartz functions on $\mathbb{R}$. Then we can assert
from ([1], Theorem 3.6) that for $0<\beta<1$, $1< p<+\infty$, $1\leq
q\leq+\infty$ and all $\phi \in\mathcal{H}$, that \begin{eqnarray}
\mathcal{C}_{ p,q}^{\phi
,\beta,k}=BD^{p,q}_{\alpha,\beta},\end{eqnarray} where
$BD^{p,q}_{\alpha,\beta}$ is the subspace of functions $f \in L^p(
\mu_\alpha)$ satisfying
\begin{eqnarray*}\Big(\int^{+ \infty}_0
\left(\frac{\omega_{p}(f)(t)}{x^\beta}\right)^q \,
\frac{dx}{x}\Big)^{\frac{1}{q}} < + \infty \;\;\;\quad
\mbox{if}\quad q < +\infty
\end{eqnarray*} and \begin{eqnarray*}  \sup_{x\in (0,+\infty)}\frac{\omega_{p}(f)(t)}{x^\beta} &<& +\infty\,\quad\quad if \quad q=+\infty.\end{eqnarray*}
 Note that when $d=1$, we have $ \mathcal{A}\subset\mathcal{H}$, then from (3.3) and (3.4),
 we obtain for $0<\beta<1$, $1< p<+\infty$, $1\leq
q\leq+\infty$ and all $\phi \in\mathcal{A}$
$$\mathcal{B}\mathcal{D}_{p,q}^{\beta,k}\subset
BD^{p,q}_{\alpha,\beta}.$$\\From (3.4), it's clear that the space $
\mathcal{C}_{ p,q}^{\phi ,\beta,k}$ is independent of the specific
selection of $\phi$ in $\mathcal{H}$. In particular, if we take for
example the function $\phi$ defined on $\mathbb{R}$ by
$$\phi(x)=-x\varphi'(x)-2(\alpha+1)\varphi(x),$$ where $\varphi$ is the
Gaussian function, $\varphi(x)=e^{-\frac{x^2}{2}}$, then we can see
that $\phi\in\mathcal{H}$. The dilation $\phi_t$ of $\phi$ gives
$\phi_t(x)=t\frac{d}{dt}\varphi_t(x)$, for $x\in\mathbb{R}$ and
$t\in(0,+\infty)$. From (3.4), it yields that for $0<\beta<1$, $1<
p<+\infty$ and $1\leq q\leq+\infty$, $$f\in
BD^{p,q}_{\alpha,\beta}\Longleftrightarrow  \int^{+ \infty}_0
\left(\frac{\|t\frac{d}{dt}(f \ast_k \varphi_t)
\|_{p,k}}{t^\beta}\right)^q \, \frac{dt}{t}  <+ \infty,$$(usual
modification when $q=+\infty$).
\end{remark}
\section{Moduli of continuity and Dunkl tranform}
In this section, we obtain inequalities for the Dunkl transform
$\mathcal{F}_k(f) $ of $L^p$-function $f$, $1\leq p\leq2$, both on
the real line and in radial case on $\mathbb{R}^d$. As consequence,
we give a quantitative form of the Riemann-Lebesgue
lemma and further results of integrability for the Dunkl transform.\\

Throughout this section, we denote by $p'$ the conjugate of $p$.
According to (2.8) and (2.14), we
recall that for $1\leq p\leq2$, $\omega_{p}(f)$ is the modulus of continuity of $f$ and is given :\\
$\bullet$ On the real line by :$$\omega_{p}(f)(t)= \|\tau_t(f) +
\tau_{-t}(f) - 2f\|_{p,k}\,,\;\;t\geq0\,,\;\;f\in
L^p_k(\mathbb{R}).$$ $\bullet$ In radial case on $\mathbb{R}^d$ by :
$$\omega_p(f) (t) = \displaystyle{\int_{S^{d-1}}}\|\tau_{tu}(f) -
f\|_{p,k}d\sigma(u)\,,\;\;t\geq0\,,\;\;f\in L^p_k(
\mathbb{R}^d)^{rad}.$$
\begin{lemma} (see [6, 7]) Let $\alpha>-\frac{1}{2}$. Then there exist positive
constants $c_{1,\alpha}$ and $c_{2,\alpha}$ such that
\begin{eqnarray} c_{1,\alpha} \min\{1,(\lambda t)^2\}\leq 1-j_\alpha(\lambda t)\leq c_{2,\alpha} \min\{1,(\lambda
t)^2\},\quad t, \lambda \in \mathbb{R}.
\end{eqnarray}
\end{lemma}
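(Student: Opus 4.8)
The plan is to reduce everything to the single variable $s=|\lambda t|\ge 0$: since $j_\alpha$ is even and $(\lambda t)^2=s^2$, the asserted double inequality is equivalent to
\[
c_{1,\alpha}\min\{1,s^2\}\;\le\;1-j_\alpha(s)\;\le\;c_{2,\alpha}\min\{1,s^2\},\qquad s\ge 0,
\]
and I would prove this on the two ranges $0\le s\le 1$ and $s\ge 1$ separately, at the end replacing the constants obtained in the two ranges by the smaller one (for $c_{1,\alpha}$) and the larger one (for $c_{2,\alpha}$) so that a single pair works on all of $[0,+\infty)$.

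On $0\le s\le 1$ we have $\min\{1,s^2\}=s^2$, so the task is to control $g(s):=(1-j_\alpha(s))/s^2$ two-sidedly on $(0,1]$. From the power series of the normalized Bessel function one has $j_\alpha(s)=1-\frac{s^2}{4(\alpha+1)}+O(s^4)$ near $0$, so $g$ extends continuously to $s=0$ with $g(0)=\frac{1}{4(\alpha+1)}>0$ (here $\alpha>-\frac12$ is used). Moreover $g(s)>0$ on all of $(0,1]$ because $j_\alpha(s)<1$ for every $s\neq 0$; the latter follows from the integral representation $j_\alpha(s)=b_\alpha\int_{-1}^{1}(1-u^2)^{\alpha-\frac12}\cos(su)\,du$, with $b_\alpha$ normalised so that $j_\alpha(0)=1$, since $\cos(su)\le 1$ with equality for almost every $u$ only when $s=0$. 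Hence $g$ is continuous and strictly positive on the compact interval $[0,1]$, and is therefore bounded there between two positive constants, which is exactly the claim on this range.

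On $s\ge 1$ we have $\min\{1,s^2\}=1$, so I need $c_{1,\alpha}\le 1-j_\alpha(s)\le c_{2,\alpha}$. The same integral representation gives $|j_\alpha(s)|\le 1$ for all $s$, hence $1-j_\alpha(s)\le 2$, which settles the upper bound. For the lower bound it suffices to show $\sup_{s\ge 1}j_\alpha(s)<1$: since $j_\alpha$ is continuous and $j_\alpha(s)\to 0$ as $s\to+\infty$, this supremum is attained at some point of $[1,+\infty)$, and it is $<1$ because $j_\alpha<1$ off the origin; then one may take $c_{1,\alpha}:=1-\sup_{s\ge 1}j_\alpha(s)>0$.

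The only genuine content is the two ``strict positivity away from a degenerate point'' facts — namely $j_\alpha<1$ on $(0,+\infty)$ and $j_\alpha(s)\to 0$ at infinity — both of which are immediate from the classical integral (or series) representation of $j_\alpha$; so I do not expect a real obstacle, the only care needed being the bookkeeping of constants so that one pair $(c_{1,\alpha},c_{2,\alpha})$ serves on all of $[0,+\infty)$. Alternatively, one simply invokes the estimates of [6, 7] as quoted, which is what the statement does.
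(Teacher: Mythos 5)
Your argument is correct, and it is worth noting that the paper itself offers no proof of this lemma at all: it is stated with the attribution ``(see [6, 7])'' and simply imported from the cited Bray--Pinsky papers. So there is nothing in the source to compare against line by line; what you have done is supply a self-contained elementary proof where the author chose to cite. Your reduction to $s=|\lambda t|$ by evenness, the split at $s=1$, the continuity-plus-positivity compactness argument for $(1-j_\alpha(s))/s^2$ on $[0,1]$ (using $j_\alpha(s)=1-\frac{s^2}{4(\alpha+1)}+O(s^4)$ to handle the removable singularity at $0$), and the use of the Poisson integral representation to get $j_\alpha<1$ off the origin together with $j_\alpha(s)\to 0$ at infinity for the range $s\ge 1$, are all sound; the hypothesis $\alpha>-\tfrac12$ is exactly what makes the weight $(1-u^2)^{\alpha-\frac12}$ integrable in that representation, which is where it is genuinely needed (the positivity of $\tfrac{1}{4(\alpha+1)}$ only requires $\alpha>-1$). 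One cosmetic remark: on $[1,+\infty)$ the supremum of $j_\alpha$ need not literally be attained in general, but your conclusion survives because the decay at infinity lets you compare with the maximum over a compact interval $[1,M]$ plus a tail bound, which is clearly what you intend. The trade-off is the expected one: the citation keeps the paper short, while your proof makes the lemma verifiable without consulting [6, 7] and makes explicit that the constants depend only on $\alpha$.
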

\begin{theorem} Let $1\leq p \leq 2$ and $f \in L^p_k(\mathbb{R})$. Then
there exists a positive constant $c$ such that for any
$t\in(0,+\infty)$, one has $$\Big(\int_{
\mathbb{R}}\min\{1,(t|x|)^{2p'}\}\,|\mathcal{F}_k(f)(x)|^{p'}
w_k(x)dx\Big)^{\frac{1}{p'}}\leq c\,\omega_p(f) (t)\,,
\;\;\mbox{if}\;\; 1<p\leq2\,,$$ $$ess\sup_{x \in
\mathbb{R}}\Big[\min\{1,(tx)^{2}\}\,|\mathcal{F}_k(f)(x)|\Big]\leq
c\,\omega_1(f) (t)\,, \;\;\mbox{if}\;\; p=1.$$\end{theorem}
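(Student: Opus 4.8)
The plan is to transfer the estimate to the Dunkl transform side, where the second--order difference $\tau_t(f)+\tau_{-t}(f)-2f$ becomes a Fourier multiplier built from the Bessel function $j_\alpha$. First I would record that for $f\in L^p_k(\mathbb{R})$ with $1\le p\le 2$ the function $g_t:=\tau_t(f)+\tau_{-t}(f)-2f$ again belongs to $L^p_k(\mathbb{R})$ by (2.15), so that $\omega_p(f)(t)=\|g_t\|_{p,k}<+\infty$ and $\mathcal{F}_k(g_t)$ is meaningful. Applying (2.6) (extended from $\mathcal{S}(\mathbb{R})$ to $L^p_k(\mathbb{R})$ by density, using that $\tau_{\pm t}$ and $\mathcal{F}_k$ are continuous on $L^p_k(\mathbb{R})$ by (2.15) and (2.4)) together with the one--dimensional kernel formula (2.12) and the evenness of $j_\alpha,j_{\alpha+1}$, I would compute that the odd terms carrying $j_{\alpha+1}$ cancel:
$$E_k(itx) + E_k(-itx) - 2 = 2\bigl(j_\alpha(tx) - 1\bigr),$$
hence
$$\mathcal{F}_k(g_t)(x) = -2\bigl(1 - j_\alpha(tx)\bigr)\,\mathcal{F}_k(f)(x), \qquad x \in \mathbb{R}.$$

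Next I would apply the appropriate Hausdorff--Young type bound to $g_t$. For $1<p\le 2$ inequality (2.4) yields
$$\Bigl(\int_{\mathbb{R}} \bigl(1 - j_\alpha(tx)\bigr)^{p'}\,|\mathcal{F}_k(f)(x)|^{p'}\, w_k(x)\,dx\Bigr)^{1/p'} \le c\,\|g_t\|_{p,k} = c\,\omega_p(f)(t),$$
while for $p=1$ property i) of the Dunkl transform gives instead
$$ess\sup_{x \in \mathbb{R}} \bigl(1 - j_\alpha(tx)\bigr)\,|\mathcal{F}_k(f)(x)| \le \tfrac{1}{2}\,\|g_t\|_{1,k} = \tfrac{1}{2}\,\omega_1(f)(t).$$

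Finally I would insert the lower bound of Lemma 4.1, $1 - j_\alpha(tx)\ge c_{1,\alpha}\min\{1,(tx)^2\}\ge 0$; since $r\mapsto r^{p'}$ is increasing, $\bigl(1 - j_\alpha(tx)\bigr)^{p'}\ge c_{1,\alpha}^{p'}\min\{1,(t|x|)^{2p'}\}$, and substituting this into the two displays above (and absorbing $c_{1,\alpha}$ into $c$) produces exactly the two asserted inequalities. The computation is essentially routine; the only point that needs a word of care is the justification that the multiplier identity (2.6) persists for $f\in L^p_k(\mathbb{R})$ when $1<p<2$, which I would settle by approximating $f$ in $L^p_k(\mathbb{R})$ by Schwartz functions and passing to the limit using the boundedness of $\tau_{\pm t}$ from (2.15) and of $\mathcal{F}_k$ from (2.4). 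Beyond that, the main bookkeeping is simply tracking the cancellation of the $j_{\alpha+1}$--terms and distinguishing the case $p'<\infty$ from $p'=\infty$.
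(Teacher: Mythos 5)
Your proposal is correct and follows essentially the same route as the paper: transfer via the multiplier identity (2.6) and the kernel formula (2.12), apply the Hausdorff--Young bound (2.4) (resp.\ property i) when $p=1$), and conclude with the lower bound of Lemma 4.1. The only difference is cosmetic --- you observe the exact cancellation $E_k(itx)+E_k(-itx)-2=2(j_\alpha(tx)-1)$ and spell out the density argument, where the paper records only the inequality $|E_k(itx)+E_k(-itx)-2|\ge 2|j_\alpha(tx)-1|$.
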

\begin{proof} For $f \in
L^p_k(\mathbb{R})$, we have by (2.6)
$$\mathcal{F}_k(\tau_t(f)+\tau_{-t}(f) -2f)(x )=
[E_k(itx)+E_k(-itx) -2]\mathcal{F}_k(f)(x) \,,$$ for $t \in (0,
+\infty)$ and a.e $x\in \mathbb{R}$. Applying (2.4), we get\\\\
$\|\mathcal{F}_k(\tau_t(f)+\tau_{-t}(f) -2f)\|_{p',k}$
\begin{eqnarray} &=&
\Big(\int_{ \mathbb{R}}|\mathcal{F}_k(f)(x)|^{p'}|E_k(itx)+E_k(-itx)
-2|^{p'} w_k(x)dx\Big)^{\frac{1}{p'}}\nonumber\\&\leq&
c\;\omega_{p}(f)(t)\,.\end{eqnarray} From (2.12), it yields
\begin{eqnarray}
|E_k(itx)+E_k(-itx) -2| &\geq& 2 |j_\alpha(tx) -1|
\end{eqnarray} then using (4.1) and (4.3) in (4.2), we obtain the result.
Here when $p=1$, we make the usual modification. \end{proof}
\begin{remark} Note that if $p=2$, then by Plancherel's theorem and
(4.1), there exist positive constants $c_1,c_2$ such that
$$c_1\,\omega_2(f) (t)\leq\Big(\int_{
\mathbb{R}}\min\{1,(tx)^{4}\}\,|\mathcal{F}_k(f)(x)|^2
w_k(x)dx\Big)^{1/2}\leq c_2\,\omega_2(f)(t). $$
\end{remark}

As consequence immediate of the theorem 4.1, we obtain the following
quantitative form of the Riemann-Lebesgue lemma.
\begin{corollary} Let $1\leq p \leq 2$ and $f \in L^p_k(\mathbb{R})$. Then
there exists a positive constant $c$ such that for any
$t\in(0,+\infty)$, one has
$$\Big(\int_{|x|>\frac{1}{t}}\,|\mathcal{F}_k(f)(x)|^{p'}
w_k(x)d(x)\Big)^{\frac{1}{p'}}\leq c\,\omega_p(f) (t)\,,
\;\;\mbox{if}\;\; 1<p\leq2\,,$$
$$ess\sup_{|x|>\frac{1}{t}} |\mathcal{F}_k(f)(x)|\leq
c\,\omega_1(f) (t)\,, \;\;\mbox{if}\;\; p=1.$$
\end{corollary}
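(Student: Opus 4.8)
The plan is to read off the corollary directly from Theorem 4.1 by restricting the domain of integration, using the single observation that on the region $\{x\in\mathbb{R}\,:\,|x|>\frac{1}{t}\}$ one has $t|x|>1$, so that $\min\{1,(t|x|)^{2p'}\}=1$ and $\min\{1,(tx)^{2}\}=1$ there. No genuine estimate is needed beyond what Theorem 4.1 already provides; the argument is just monotonicity of the integral (resp. of the essential supremum) under shrinking the set, together with the fact that the cut-off weight equals $1$ on the relevant set.

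First I would treat the case $1<p\leq2$. Since the integrand $\min\{1,(t|x|)^{2p'}\}\,|\mathcal{F}_k(f)(x)|^{p'}w_k(x)$ is nonnegative, restricting the integral appearing in Theorem 4.1 from $\mathbb{R}$ to $\{|x|>\frac{1}{t}\}$ can only decrease it, and on that set the factor $\min\{1,(t|x|)^{2p'}\}$ is identically $1$; hence
$$\int_{|x|>\frac{1}{t}}|\mathcal{F}_k(f)(x)|^{p'}w_k(x)dx \leq \int_{\mathbb{R}}\min\{1,(t|x|)^{2p'}\}\,|\mathcal{F}_k(f)(x)|^{p'}w_k(x)dx \leq \big(c\,\omega_p(f)(t)\big)^{p'}.$$
Taking $p'$-th roots yields the first inequality, with the same constant $c$.

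For $p=1$ the reasoning is the parallel one with the essential supremum in place of the integral: on $\{|x|>\frac{1}{t}\}$ we have $\min\{1,(tx)^{2}\}=1$, so
$$ess\sup_{|x|>\frac{1}{t}} |\mathcal{F}_k(f)(x)| = ess\sup_{|x|>\frac{1}{t}}\Big[\min\{1,(tx)^{2}\}\,|\mathcal{F}_k(f)(x)|\Big] \leq ess\sup_{x\in\mathbb{R}}\Big[\min\{1,(tx)^{2}\}\,|\mathcal{F}_k(f)(x)|\Big] \leq c\,\omega_1(f)(t),$$
which is the second inequality. The only point worth noting is that all the quantities above are meaningful: by (2.4) the Dunkl transform of $f\in L^p_k(\mathbb{R})$, $1\leq p\leq 2$, lies in $L^{p'}_k(\mathbb{R})$, so there is no real obstacle — the corollary is, as stated, an immediate consequence of Theorem 4.1.
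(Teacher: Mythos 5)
Your proof is correct and is exactly the argument the paper intends: the paper states the corollary as an ``immediate consequence'' of Theorem 4.1 without writing out details, and your observation that $\min\{1,(t|x|)^{2p'}\}=1$ on $\{|x|>\frac{1}{t}\}$ together with monotonicity of the integral (resp.\ essential supremum) is precisely that immediate deduction.
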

\begin{theorem} Let $\alpha>-\frac{1}{2}$, $\beta>2(\alpha+1)$, $A>0$ and $f\in
L^1_k(\mathbb{R})$. If $f$ satisfies \begin{eqnarray}\sup_{t \in
(0,+\infty)}\; \frac{\omega_1(f) (t)}{t^\beta} < A\;,\end{eqnarray}
then $$\mathcal{F}_k(f) \in L^1_k(\mathbb{R}).$$
\end{theorem}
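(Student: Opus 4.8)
The plan is to bound $\displaystyle\int_{\mathbb{R}}\lvert\mathcal{F}_k(f)(x)\rvert\,w_k(x)\,dx$ directly, splitting the integral at $\lvert x\rvert = 1$. First one should note that $\omega_1(f)(t)$ is finite for every $t$: by (2.14), $\|\tau_{\pm t}(f)\|_{1,k}\le 3\|f\|_{1,k}$, so $\omega_1(f)(t)\le 8\|f\|_{1,k}$. On the region $\lvert x\rvert\le 1$ there is nothing delicate: by property i) of the Dunkl transform $\|\mathcal{F}_k(f)\|_{\infty,k}\le\|f\|_{1,k}$, and since $w_k(x)=\frac{\lvert x\rvert^{2\alpha+1}}{2^{\alpha+1}\Gamma(\alpha+1)}$ with $2\alpha+1>-1$ the weight is integrable on the unit ball, whence $\int_{\lvert x\rvert\le 1}\lvert\mathcal{F}_k(f)(x)\rvert\,w_k(x)\,dx\le\|f\|_{1,k}\int_{\lvert x\rvert\le 1}w_k(x)\,dx<+\infty$.

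The real content is the tail $\lvert x\rvert>1$, which I would cover by the dyadic annuli $C_j=\{x\in\mathbb{R}:2^{j}\le\lvert x\rvert<2^{j+1}\}$, $j\in\mathbb{N}$. Fix $j$ and apply Corollary 4.1 in the case $p=1$ with $t=2^{-j}$: for a.e.\ $x\in C_j$ one has $\lvert x\rvert>1/t=2^{j}$, hence $\lvert\mathcal{F}_k(f)(x)\rvert\le c\,\omega_1(f)(2^{-j})$, and combining this with hypothesis (4.4) gives $\lvert\mathcal{F}_k(f)(x)\rvert\le cA\,2^{-j\beta}$ for a.e.\ $x\in C_j$. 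On the other hand an elementary computation with $w_k(x)=c\lvert x\rvert^{2\alpha+1}$ yields $\int_{C_j}w_k(x)\,dx=c\,2^{2(\alpha+1)j}$, the constant being independent of $j$ because $2\alpha+2>0$. Multiplying, $\int_{C_j}\lvert\mathcal{F}_k(f)(x)\rvert\,w_k(x)\,dx\le cA\,2^{(2(\alpha+1)-\beta)j}$.

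Summing over $j\in\mathbb{N}$ and using the standing assumption $\beta>2(\alpha+1)$, which makes the exponent $2(\alpha+1)-\beta$ strictly negative, the series $\sum_{j\ge 0}2^{(2(\alpha+1)-\beta)j}$ is a convergent geometric series, so $\int_{\lvert x\rvert>1}\lvert\mathcal{F}_k(f)(x)\rvert\,w_k(x)\,dx<+\infty$. Adding the two contributions gives $\mathcal{F}_k(f)\in L^1_k(\mathbb{R})$.

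I do not expect a genuine obstacle here: the heart of the argument is the numerology that the $w_k$-volume of $C_j$ grows like $2^{2(\alpha+1)j}$ while Corollary 4.1 together with (4.4) forces $\lvert\mathcal{F}_k(f)\rvert$ to decay like $2^{-\beta j}$ on $C_j$, and the hypothesis $\beta>2(\alpha+1)$ is exactly the balance needed for summability. The only points requiring mild care are to state the pointwise bound on each annulus correctly — it descends from an essential supremum over the set $\{\lvert x\rvert>1/t\}$, so one works over the countable family $\{C_j\}$ rather than over uncountably many thresholds, avoiding an uncountable union of null sets — and, at the very start, the observation that $\omega_1(f)$ is everywhere finite so that the left-hand side of (4.4) makes sense.
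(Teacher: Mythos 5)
Your argument is correct, but it is not the route the paper takes, so a comparison is in order. You prove the tail estimate by covering $\{|x|>1\}$ with dyadic annuli $C_j=\{2^j\le |x|<2^{j+1}\}$, applying Corollary 4.1 (the ``far'' region of Theorem 4.1, where $\min\{1,(tx)^2\}=1$) at the countable family of scales $t=2^{-j}$ to get the pointwise decay $|\mathcal{F}_k(f)|\le cA\,2^{-j\beta}$ a.e.\ on $C_j$, and balancing this against the $w_k$-volume $\int_{C_j}w_k\,dx\asymp 2^{2(\alpha+1)j}$; the hypothesis $\beta>2(\alpha+1)$ is exactly what makes the resulting geometric series converge. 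The paper instead works with the ``near'' region of Theorem 4.1: from $\operatorname{ess\,sup}_{|x|\le 1/t}(tx)^2|\mathcal{F}_k(f)(x)|\le c\,t^{\beta}$ it derives, via H\"older with the splitting $|x|=x^2\cdot|x|^{-1}$, the bound $\int_{|x|\le 1/t}|x|\,|\mathcal{F}_k(f)(x)|\,w_k(x)\,dx\le c\,t^{\beta-2(\alpha+1)-1}$, and then integrates this in $t$ over $(0,1)$, using Fubini to convert $\int_0^{\min(1,1/|x|)}dt=1/|x|$ into the desired tail integral $\int_{|x|\ge 1}|\mathcal{F}_k(f)|\,w_k\,dx$. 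The two arguments encode the same numerology ($\beta-2(\alpha+1)>0$ appears as summability of a geometric series for you, and as integrability of $t^{\beta-2(\alpha+1)-1}$ near $0$ for the paper); yours is more elementary and makes the balance of decay versus volume growth visible, while the paper's Fubini device avoids any decomposition and transfers verbatim to the radial case on $\mathbb{R}^d$ (Theorem 4.4) --- though your dyadic version would generalize there just as easily. Your preliminary remarks (finiteness of $\omega_1(f)$ via (2.14), integrability of $\mathcal{F}_k(f)w_k$ on $[-1,1]$ from $\|\mathcal{F}_k(f)\|_{\infty,k}\le\|f\|_{1,k}$, and the care taken to invoke the essential supremum only over countably many thresholds) are all sound and match the paper's treatment of the region $|x|\le 1$.
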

\begin{proof} From the theorem 4.1 and (4.4), we obtain
\begin{eqnarray} ess\sup_{|x|\leq\frac{1}{t}} (tx)^{2}|\mathcal{F}_k(f)(x)|\leq
c\,\omega_1(f) (t)\leq c\,t^\beta \end{eqnarray} By H\"older's
inequality, (4.5) and (2.13), we have
\begin{eqnarray*}\int_{{|x|\leq
\frac{1}{t}}}|x|\,|\mathcal{F}_k(f)(x)| \, w_k(x)dx &\leq&
ess\sup_{|x|\leq\frac{1}{t}}
x^{2}|\mathcal{F}_k(f)(x)|\int_{|x|\leq\frac{1}{t}}|x|^{-1}
w_k(x)dx\\&\leq& c\;t^{\beta-2}\, \int_{0}^{\frac{1}{t}}
x^{2\alpha}\, dx \leq\; c\; t^{ \beta - 2(\alpha+1)-1}.
\end{eqnarray*} Integrating with respect to $t$ over $(0,1)$
and applying Fubini's theorem, we obtain $$\int_{|x|\geq 1}
|\mathcal{F}_k(f)(x)| w_k(x)dx \leq c\int^{1}_0 t^{ \beta -
2(\alpha+1)-1}dt <+\infty\;.$$ Since $L^{\infty}_k([-1,1],w_k(x)dx)
\subset L^{1}_k([-1,1],w_k(x)dx)$, we deduce that $\mathcal{F}_k(f)$
is in $L^{1}_k(\mathbb{R})$.
\end{proof}
\begin{theorem} Let $1\leq p \leq 2$ and $f \in L^p_k(
\mathbb{R}^d)^{rad}$. Then there exists a positive constant $c$ such
that for any $t\in(0,+\infty)$, one has $$\Big(\int_{
\mathbb{R}^d}\min\{1,(t\|x\|)^{2p'}\}\,|\mathcal{F}_k(f)(x)|^{p'}
w_k(x)dx\Big)^{\frac{1}{p'}}\leq c\,\omega_p(f) (t)\,,
\;\;\mbox{if}\;\; 1<p\leq2\,,$$ $$ess\sup_{x \in
\mathbb{R}^d}\Big[\min\{1,(t\|x\|)^{2}\}\,|\mathcal{F}_k(f)(x)|\Big]\leq
c\,\omega_1(f) (t)\,, \;\;\mbox{if}\;\; p=1.$$
\end{theorem}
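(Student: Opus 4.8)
The plan is to mimic the one-variable argument of Theorem 4.1, replacing the even kernel combination $E_k(itx)+E_k(-itx)-2$ by the spherical average of $E_k(-itu,x)$ over $u\in S^{d-1}$. First I would fix $f\in L^p_k(\mathbb{R}^d)^{rad}$ and $t\in(0,+\infty)$, and take the Dunkl transform of $\tau_{tu}(f)-f$; by (2.6) this equals $[E_k(itu,x)-1]\mathcal{F}_k(f)(x)$ for a.e.\ $x$. Integrating in $u$ against $d\sigma(u)$ and using the key identity (2.3), namely $\int_{S^{d-1}}E_k(-itx,u)w_k(u)d\sigma(u)=d_k\,j_{\gamma+\frac d2-1}(t\|x\|)$, one gets that the spherical average of $\mathcal{F}_k(\tau_{tu}(f)-f)$ is, up to the weight $d_k^{-1}$ inserted appropriately, $[j_{\gamma+\frac d2-1}(t\|x\|)-1]\mathcal{F}_k(f)(x)$ — recall $\mathcal{F}_k(f)$ is radial here by (2.5).

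Next I would bound the $L^{p'}_k$-norm. By Minkowski's integral inequality,
\[
\Big\|\int_{S^{d-1}}\mathcal{F}_k(\tau_{tu}(f)-f)\,d\sigma(u)\Big\|_{p',k}
\le \int_{S^{d-1}}\|\mathcal{F}_k(\tau_{tu}(f)-f)\|_{p',k}\,d\sigma(u),
\]
and each term on the right is $\le c\,\|\tau_{tu}(f)-f\|_{p,k}$ by the Hausdorff--Young-type estimate (2.4); integrating in $u$ gives exactly $c\,\omega_p(f)(t)$ by the definition of the radial modulus of continuity. On the other hand the left-hand norm is $\big\||j_{\gamma+\frac d2-1}(t\|x\|)-1|\,\mathcal{F}_k(f)(x)\big\|_{p',k}$, and Lemma 4.1 (applied with $\alpha=\gamma+\frac d2-1>-\frac12$) gives $|1-j_{\gamma+\frac d2-1}(t\|x\|)|\ge c_{1}\min\{1,(t\|x\|)^2\}$. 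Combining, $\big(\int_{\mathbb{R}^d}\min\{1,(t\|x\|)^{2p'}\}|\mathcal{F}_k(f)(x)|^{p'}w_k(x)dx\big)^{1/p'}\le c\,\omega_p(f)(t)$, which is the claim for $1<p\le2$; the case $p=1$ follows with the usual modification ($p'=\infty$), using $\|\mathcal{F}_k(g)\|_{\infty,k}\le\|g\|_{1,k}$ from property i).

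The main obstacle is making the interchange of $\int_{S^{d-1}}$ with $\mathcal{F}_k$ rigorous so that (2.3) can be applied: one must justify that $\mathcal{F}_k\big(\int_{S^{d-1}}\tau_{tu}(f)\,d\sigma(u)\big)=\int_{S^{d-1}}\mathcal{F}_k(\tau_{tu}(f))\,d\sigma(u)$, i.e.\ a Fubini argument pairing the transform against test functions, together with the fact (from (2.5) and the remarks after it) that $\mathcal{F}_k(f)$ is radial so that the product $j_{\gamma+\frac d2-1}(t\|x\|)\mathcal{F}_k(f)(x)$ is the honest pointwise product. Once this is set up — for $f\in\mathcal{S}(\mathbb{R}^d)^{rad}$ first, then by density and the contraction bounds (2.8), (2.11) for general radial $f\in L^p_k$ — the rest is the same chain of inequalities as in Theorem 4.1.
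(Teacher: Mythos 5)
Your proof is correct, but it exploits the spherical-mean identity (2.3) at a different spot than the paper does, so the two arguments are genuinely distinct. The paper fixes a single direction $u\in S^{d-1}$, writes the $L^{p'}_k$-norm from (4.6) in polar coordinates in the frequency variable $x=rz$ (using (2.1), (2.5) and the radiality of $\mathcal{F}_k(f)$), and then applies H\"older/Jensen to the inner angular integral $\int_{S^{d-1}}|E_k(itru,z)-1|^{p'}w_k(z)d\sigma(z)$ to bound it from below by $c\,|j_{\gamma+\frac{d}{2}-1}(rt)-1|^{p'}$; this gives the stronger conclusion that the left-hand side of the theorem is controlled by $c\,\|\tau_{tu}(f)-f\|_{p,k}$ for each \emph{single} $u$, after which averaging in $u$ is trivial. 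You instead average the Fourier-side identity over the translation direction $u$ against $w_k(u)d\sigma(u)$, obtaining $d_k[j_{\gamma+\frac{d}{2}-1}(t\|x\|)-1]\mathcal{F}_k(f)(x)$ pointwise, and then invoke Minkowski's integral inequality together with (2.4); both routes finish with Lemma 4.1. Two small remarks. First, since (2.3) forces the weight $w_k(u)d\sigma(u)$ while $\omega_p(f)(t)$ is defined with the plain measure $d\sigma(u)$, your last step yields $c\int_{S^{d-1}}\|\tau_{tu}(f)-f\|_{p,k}w_k(u)d\sigma(u)$, not ``exactly'' $c\,\omega_p(f)(t)$; you need the (easy) bound $\sup_{S^{d-1}}w_k<+\infty$ to conclude. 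Second, the interchange you flag as the main obstacle is lighter than you fear: you never need to identify $\int_{S^{d-1}}\tau_{tu}(f)\,w_k(u)d\sigma(u)$ as something whose Dunkl transform you compute; the a.e.-pointwise identity $\mathcal{F}_k(\tau_{tu}(f)-f)(x)=[E_k(itu,x)-1]\mathcal{F}_k(f)(x)$ for each fixed $u$, plus Minkowski applied to the jointly measurable kernel $(u,x)\mapsto[E_k(itu,x)-1]\mathcal{F}_k(f)(x)$, suffices. A minor benefit of your variant is that radiality of $\mathcal{F}_k(f)$ plays no role in the convexity step (it is only needed, via (2.8) and the definition of $\omega_p$, to make $\tau_{tu}(f)$ meaningful in $L^p_k$), whereas the paper's polar-coordinate factorization depends on it.
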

\begin{proof}  Let $f \in L^p_k(
\mathbb{R}^d)^{rad},$ we can write from (2.6)
$$\mathcal{F}_k(\tau_{tu}(f)-f)(x) = \mathcal{F}_k(f)(x) [E_k(itu,x)-1]\;,
 $$ for $u \in S^{d-1}$, $t \in (0,+\infty)$ and a.e $x
\in \mathbb{R}^d$. Applying (2.4), we get
\begin{eqnarray}\|\mathcal{F}_k(\tau_{tu}(f)-f))\|_{p',k} &=&
\Big(\int_{\mathbb{R}^d}|\mathcal{F}_k(f)(x)|^{p'}
|E_k(itu,x)-1|^{p'} w_k(x)dx\Big)^{\frac{1}{p'}}\nonumber\\&\leq&
c\;\|\tau_{tu}(f) - f\|_{p,k}\,.
\end{eqnarray}
According to (2.1) and (2.5), we have \\ \\
$\displaystyle{\int_{\mathbb{R}^d} |\mathcal{F}_k(f)(x)|^{p'}
|E_k(itu,x)-1|^{p'}  w_k(x)}dx$
\begin{eqnarray*}
= d_k \int^{+ \infty}_0 |\mathcal{H}_{\gamma +
\frac{d}{2}-1}(F)(r)|^{p'}\Big(\int_{S^{d-1}}|E_k(itru,z)-1|^{p'}
w_k(z)d\sigma(z)\Big)r^{2\gamma+d-1}dr, \end{eqnarray*}
 where $F$ is a function
on $(0,+ \infty)$ such
that $F(\|x\|) = f(x)$, for all $x \in \mathbb{R}^d$.\\
On the other hand, by (2.2), (2.3) and H\"older's inequality, we
get\\$d_k |j_{\gamma + \frac{d}{2}-1}(rt)-1|$
 \begin{eqnarray*}&=& |\int_{S^{d-1}}[E_k(i
tru,z)-1]w_k(z)d\sigma(z)|\nonumber\\
 &\leq&\Big(\int_{S^{d-1}}w_k(z)d\sigma(z)\Big)^{\frac{1}{p}}
\Big(\int_{S^{d-1}}|E_k(i tru,z)-1|^{p'} w_k(z)d\sigma(z)\Big)^{\frac{1}{p'}}\nonumber\\
 &\leq& d_k^{\frac{1}{p}} \Big(\int_{S^{d-1}}|E_k(i tru,z)-1|^{p'} w_k(z)d\sigma(z)\Big)^{\frac{1}{p'}},
\end{eqnarray*}
hence we obtain,
\begin{eqnarray}|j_{\gamma + \frac{d}{2}-1} (rt) - 1|^{p'} \leq c
 \int_{S^{d-1}}|E_k(itru,z)-1|^{p'} w_k(z)d\sigma(z).\end{eqnarray}
 From (4.6) and (4.7), it follows that
\begin{eqnarray*}\int^{+ \infty}_0 |\mathcal{H}_{\gamma + \frac{d}{2} -1}
 (F)(r)|^{p'}
 |j_{\gamma + \frac{d}{2} -1}(rt)-1|^{p'}
 r^{2\gamma+d-1}dr \leq c\,\|\tau_{tu}(f) - f\|_{p,k}^{p'} \,,\end{eqnarray*}
 So using (4.1), (2.1) and (2.5), we obtain \begin{eqnarray*}\int_{
\mathbb{R}^d}\min\{1,(t\|x\|)^{2p'}\}\,|\mathcal{F}_k(f)(x)|^{p'}
w_k(x)dx \leq c\,\|\tau_{tu}(f) - f\|_{p,k}^{p'} \;\end{eqnarray*}
and we deduce our result. When $p=1$, we make the usual
modification.\end{proof}
\begin{remark} Note that if $p=2$, by Plancherel's theorem and
(4.1), there exist positive constants $c_1,c_2$ such that
$$c_1\,\omega_2(f) (t)\leq\Big(\int_{
\mathbb{R}^d}\min\{1,(t\|x\|)^{4}\}\,|\mathcal{F}_k(f)(x)|^2
w_k(x)d(x)\Big)^{1/2}\leq c_2\,\omega_2(f)(t).$$
\end{remark}

As consequence of the theorem 4.3, we obtain the following
quantitative form of the Riemann-Lebesgue lemma.
\begin{corollary} Let $1\leq p \leq 2$ and $f \in L^p_k(
\mathbb{R}^d)^{rad}$. Then there exists a positive constant $c$ such
that for any $t\in(0,+\infty)$, one has
$$\Big(\int_{\|x\|>\frac{1}{t}}\,|\mathcal{F}_k(f)(x)|^{p'}
w_k(x)d(x)\Big)^{\frac{1}{p'}}\leq c\,\omega_p(f) (t)\,,
\;\;\mbox{if}\;\; 1<p\leq2\,,$$
$$ess\sup_{\|x\|>\frac{1}{t}} |\mathcal{F}_k(f)(x)|\leq
c\,\omega_1(f) (t)\,, \;\;\mbox{if}\;\; p=1.$$
\end{corollary}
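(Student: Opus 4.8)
The plan is to obtain Corollary 4.3 as an immediate consequence of Theorem 4.3, by discarding the truncation factor on the range $\|x\|>\frac{1}{t}$, where it is identically $1$.

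First I would fix $t\in(0,+\infty)$ and $f\in L^p_k(\mathbb{R}^d)^{rad}$, and note that for every $x\in\mathbb{R}^d$ with $\|x\|>\frac{1}{t}$ one has $t\|x\|>1$, so that $\min\{1,(t\|x\|)^{2p'}\}=1$ (and likewise $\min\{1,(t\|x\|)^{2}\}=1$). In the case $1<p\leq2$, since the integrand appearing in Theorem 4.3 is nonnegative, restricting the integral over $\mathbb{R}^d$ to the smaller set $\{\|x\|>\frac{1}{t}\}$ can only decrease it, and on that set the truncation factor equals $1$; hence
\[
\int_{\|x\|>\frac{1}{t}}|\mathcal{F}_k(f)(x)|^{p'}w_k(x)\,dx \;\leq\; \int_{\mathbb{R}^d}\min\{1,(t\|x\|)^{2p'}\}\,|\mathcal{F}_k(f)(x)|^{p'}w_k(x)\,dx \;\leq\; \Big(c\,\omega_p(f)(t)\Big)^{p'},
\]
and taking $p'$-th roots gives the first inequality with the same constant $c$ as in Theorem 4.3. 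In the case $p=1$, the essential supremum of $|\mathcal{F}_k(f)|$ over $\{\|x\|>\frac{1}{t}\}$ coincides with the essential supremum of $\min\{1,(t\|x\|)^{2}\}\,|\mathcal{F}_k(f)|$ over that same set, which is bounded above by the essential supremum of the latter quantity over all of $\mathbb{R}^d$; Theorem 4.3 then yields $ess\sup_{\|x\|>\frac{1}{t}}|\mathcal{F}_k(f)(x)|\leq c\,\omega_1(f)(t)$.

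There is essentially no obstacle here: the argument is pure bookkeeping once Theorem 4.3 is in hand. The only points worth mentioning are that $\mathcal{F}_k(f)$ is defined only a.e.\ through estimate (2.4), so both conclusions are understood in the almost-everywhere sense, and that the right-hand sides are finite, since by (2.8) one has $\omega_p(f)(t)\leq\int_{S^{d-1}}\big(\|\tau_{tu}(f)\|_{p,k}+\|f\|_{p,k}\big)\,d\sigma(u)\leq 2\|f\|_{p,k}<+\infty$. (The companion statement on the real line, Corollary 4.1, is deduced from Theorem 4.1 in exactly the same fashion, using that $\min\{1,(tx)^{2p'}\}=1$ for $|x|>\frac{1}{t}$.)
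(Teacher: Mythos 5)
Your argument is correct and is exactly the one the paper intends: the corollary is stated as an immediate consequence of Theorem 4.3 (no separate proof is given), obtained by observing that $\min\{1,(t\|x\|)^{2p'}\}=1$ on the set $\{\|x\|>\frac{1}{t}\}$ and restricting the integral (respectively the essential supremum) to that set.
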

\begin{theorem} Let $\beta>2(\gamma+\frac{d}{2})$, $A>0$ and $f\in
L^1_k(\mathbb{R}^d)^{rad}$. If $f$ satisfies \begin{eqnarray}\sup_{t
\in (0,+\infty)}\; \frac{\omega_1(f) (t)}{t^\beta} <
A\;,\end{eqnarray} then $$\mathcal{F}_k(f) \in
L^1_k(\mathbb{R}^d)^{rad}.$$
\end{theorem}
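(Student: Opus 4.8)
The plan is to follow the proof of Theorem 4.2 almost verbatim, with Theorem 4.3 replacing Theorem 4.1 and the radial integration formula (2.1) replacing the one-dimensional weight computation (2.13). First I would insert the hypothesis (4.9) into Theorem 4.3 taken with $p=1$: for every $t\in(0,+\infty)$,
$$ess\sup_{\|x\|\leq\frac1t}\;(t\|x\|)^{2}\,|\mathcal{F}_k(f)(x)|\;\leq\; c\,\omega_1(f)(t)\;\leq\; c\,t^{\beta},$$
so that $\|x\|^{2}\,|\mathcal{F}_k(f)(x)|\leq c\,t^{\beta-2}$ for almost every $x$ with $\|x\|\leq\frac1t$.

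Next, writing $\|x\|\,|\mathcal{F}_k(f)(x)|=\big(\|x\|^{2}|\mathcal{F}_k(f)(x)|\big)\cdot\|x\|^{-1}$ and pulling out the essential supremum above, I would evaluate the remaining integral by (2.1): since the function $x\mapsto\|x\|^{-1}w_k(x)$ is radial,
$$\int_{\|x\|\leq\frac1t}\|x\|^{-1}w_k(x)\,dx\;=\;d_k\int_0^{1/t}r^{2\gamma+d-2}\,dr\;=\;c\,t^{-(2\gamma+d-1)},$$
the inner integral being finite because $2\gamma+d-1>0$ in the present setting. Combining the two displays gives
$$\int_{\|x\|\leq\frac1t}\|x\|\,|\mathcal{F}_k(f)(x)|\,w_k(x)\,dx\;\leq\; c\,t^{\,\beta-2(\gamma+\frac d2)-1},\qquad t\in(0,+\infty).$$

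Then, exactly as in Theorem 4.2, I would integrate this inequality in $t$ over $(0,1)$ and apply Fubini's theorem; on the region $\{\|x\|\geq 1\}$ the $t$-integral equals $\int_0^{1/\|x\|}dt=\|x\|^{-1}$, which cancels the factor $\|x\|$, leaving
$$\int_{\|x\|\geq1}|\mathcal{F}_k(f)(x)|\,w_k(x)\,dx\;\leq\; c\int_0^1 t^{\,\beta-2(\gamma+\frac d2)-1}\,dt\;<\;+\infty,$$
the finiteness being precisely the assumption $\beta>2(\gamma+\frac d2)$. On the ball $B(0,1)$ the measure $w_k(x)\,dx$ is finite and $\mathcal{F}_k(f)$ is bounded by $\|f\|_{1,k}$ (property i) of the Dunkl transform), so $L^\infty_k(B(0,1))\subset L^1_k(B(0,1))$ yields $\mathcal{F}_k(f)\in L^1_k(B(0,1))$; adding the two pieces, $\mathcal{F}_k(f)\in L^1_k(\mathbb{R}^d)$. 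Finally, since $f\in L^1_k(\mathbb{R}^d)^{rad}$ its Dunkl transform is radial (see (2.5)), so $\mathcal{F}_k(f)\in L^1_k(\mathbb{R}^d)^{rad}$.

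The only step requiring care is the change of variables in the middle display: one must track the exponents so that the radial formula (2.1) produces exactly the power $\beta-2(\gamma+\frac d2)-1$, which is what makes the stated threshold on $\beta$ the sharp condition for integrability at $t=0$. The Fubini manipulation and the estimate near the origin are the routine repetition of the argument of Theorem 4.2 and present no real obstacle.
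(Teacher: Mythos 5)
Your proposal is correct and follows essentially the same route as the paper's own proof: apply Theorem 4.3 with $p=1$ to get the essential-supremum bound, pull it out against the radial integral $\int_{\|x\|\le 1/t}\|x\|^{-1}w_k(x)\,dx$ computed via (2.1), integrate in $t$ over $(0,1)$ with Fubini to control $\int_{\|x\|\ge 1}|\mathcal{F}_k(f)|\,w_k\,dx$, and handle the unit ball by boundedness of $\mathcal{F}_k(f)$. Your added remarks on the radiality of $\mathcal{F}_k(f)$ and the exponent bookkeeping are accurate and only make explicit what the paper leaves implicit.
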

\begin{proof} From the theorem 4.3 and (4.8), we obtain
\begin{eqnarray} ess\sup_{\|x\|\leq\frac{1}{t}} (t\|x\|)^{2}|\mathcal{F}_k(f)(x)|\leq
c\,\omega_1(f) (t)\leq c\,t^\beta \end{eqnarray} By H\"older's
inequality, (4.9) and (2.1), we
have\\\\$\displaystyle{\int_{{\|x\|\leq
\frac{1}{t}}}\|x\|\,|\mathcal{F}_k(f)(x)| \, w_k(x)dx}$
\begin{eqnarray*} &\leq&
ess\sup_{\|x\|\leq\frac{1}{t}}
\|x\|^{2}|\mathcal{F}_k(f)(x)|\int_{\|x\|\leq\frac{1}{t}}\|x\|^{-1}
w_k(x)dx\\&\leq& c\;t^{\beta-2}\, \int_{0}^{\frac{1}{t}}
r^{2\gamma+d-2}\, dr \leq\; c\; t^{ \beta -
2(\gamma+\frac{d}{2})-1}.
\end{eqnarray*}  Integrating with respect to $t$ over $(0,1)$
and applying Fubini's theorem, we obtain $$\int_{\|x\|\geq 1}
|\mathcal{F}_k(f)(x)| w_k(x)dx \leq c\int^{1}_0 t^{ \beta -
2(\gamma+\frac{d}{2})-1}dt <+\infty\;.$$ Since
$L^{\infty}_k(B(0,1),w_k(x)dx) \subset L^{1}_k(B(0,1),w_k(x)dx)$, we
deduce that $\mathcal{F}_k(f)$ is in $L^{1}_k(\mathbb{R}^d)$.
\end{proof}

\end{document}